\newtheorem{lem}{Lemma}
\newtheorem{thm}{Theorem}
\newtheorem{definition}{Definition}
\newtheorem{remark}{Remark}
\newtheorem{corollary}{Corollary}
\title{Existence, stability, \added{and symmetry} of relative equilibria with a dominant vortex}
\author[*]{Anna M. Barry}
\author[**]{Alanna Hoyer-Leitzel}
\affil[*]{Department of Mathematics, University of British Columbia}
\affil[**]{Department of Mathematics and Statistics, Mount Holyoke College}
\begin{document}
\maketitle

\begin{abstract}
We analyze existence, stability, and symmetry of point vortex relative equilibria with one dominant vortex and $N$ vortices with infinitesimal circulation.  The dimension of the problem can be reduced by taking an infinitesimal circulation limit, resulting in the so-called $(1+N)$-vortex problem. In this work, we first generalize the reduction to allow for circulations of varying signs and weights.  We then prove that symmetric configurations require equality of two circulation parameters in the $(1+3)$-vortex problem and show that there are stable asymmetric relative equilibria. In a number of examples, we use rigorous methods from algebraic geometry to count all relative equilibria.
\end{abstract}

\section{Introduction}

Point vortex models propose that the motion of small-core, well-separated vortices in a two-dimensional fluid can be described by a set of ordinary differential equations that treats each vortex as a single point, rather than the full partial differential equations for the fluid velocity.  Although there are several point vortex models ranging from geophysical to superfluid literature (see for example \cite{torres2011dynamics,middelkamp2011guiding,lim2001point,morikawa1960geostrophic}), the classical version is derived from the Euler equations and is called the \textit{$n$-vortex problem} \cite{kirchhoff1877vorlesungen,helmholtz1867}.  

One special type of solution to the $n$-vortex problem is the \textit{relative equilibrium}, which is a vortex configuration that appears fixed when viewed in a rigidly rotating frame.  From a physical perspective, stable relative equilibria are of particular interest, as they are most likely to be observed in nature, examples can be found in \cite{kossin2004mesovortices, durkin2000experiments, aref2003vortex}.
The algebraic equations for point vortex relative equilibria are strikingly similar to the analogous set from the $n$-body problem in celestial mechanics, and because of this, the two often exhibit similar relative equilibrium configurations. Examples include the $n$- and $(n+1)$-gons, subjects of Adams prize winning essays by Maxwell \cite{maxwell1859stability} and Thomson (Lord Kelvin) \cite{thomson1883treatise}. These configurations are made up of vortices or masses placed at the vertices of a regular polygon with or without a vortex or mass at the center. Motivated by a conjecture due to Moeckel for the gravitational problem \cite{albouy2013someproblems}, Roberts showed that linearly stable relative equilibria in the $n$-vortex problem are minima of the Hamiltonian restricted to a level set of the angular impulse (the analogue of moment of inertia) when all vortices are spinning in the same direction \cite{roberts2013stability}.  However, if vortex circulations are allowed to have different signs, the topological technique is not as straightforward, as the level surfaces become hyperboloids rather than spheres, and the ``circulation metric'' becomes an indefinite inner product.  This work points to an important difference between the two problems: a vortex can have negative circulation, while the analogous quantity in the celestial mechanics problem, the mass, is nonnegative. 

The circulation of a point vortex is a measure of the rotation of the surrounding fluid, and thus a vortex with large circulation will play a critical role in organizing the flow. With this as motivation, we analyze relative equilibria of the point vortex equations with one strong vortex and $N$ weak vortices, the so-called \textit{$(1+N)$-vortex problem}.  $(1+N)$-point ``mass'' problems have been studied in both the celestial mechanics and vortex communities \cite{hall,moeckel1994linear,casasayas1994central,roberts1998linear,barry2012relative}.  Earlier work on the $(1+N)$-vortex problem assumes that all weak vortices have the same circulation \cite{barry2012relative}.  In this article, we consider the more general case of relative equilibria with a dominant vortex where weak vortex circulations are allowed to have different sizes and signs. Using similar techniques, we reduce the problem to finding critical points of a particular function defined on a circle and show that stability of configurations is determined by eigenvalues of a circulation-weighted version of its Hessian matrix.  

\added{We use algebraic geometric methods, together with the existence and stability results for the $(1+N)$-vortex problem to perform an analysis of symmetry in the $(1+3)$-vortex problem.} Existence of asymmetric relative equilibria has been numerically documented in the literature. For example, Aref and Vainchtein found families of asymmetric relative equilibria by growing new configurations from infinitesimal cases \cite{aref1998point}.  In \cite{newton2007construction}, Newton and Chamoun found asymmetric relative equilibria via calculation of Brownian ratchets. 
Analytical studies of vortex relative equilibria have largely focused on symmetric configurations and/or positive circulation parameters, but there are a few exceptions. One example is the work of Hampton, Roberts, and Santoprete \cite{hampton2014relative}.  Using techniques from algebraic geometry similar to those presented here, they proved the existence of asymmetric configurations in the $4$-vortex problem with two pairs of equal circulations. \added{In addition, Corbera, Cors and Llibre \cite{corbera2011central} classified bifurcations of relative equilibria with two equal masses and found asymmetric cases in the $(1+3)$-body problem.} In \cite{barry2012relative}, there was a surprising identification of \textit{stable} relative equilibria which are not radially symmetric.  In the present work, we have the even more surprising result that fully asymmetric (without a line of symmetry) configurations can be stable. \added{Moreover, we present the novel use of analytical root counting methods in order to verify that numerical calculations find all possible families of relative equilibria.}

The rest of the paper is outlined as follows.  In the next section we introduce the classical $n$-vortex problem and relative equilibria.  In Section \ref{sect2} we define the $(1+N)$-vortex problem and prove results on existence of configurations. Stability is also discussed in this section. In Section \ref{sect4} we introduce the necessary background from algebraic geometry that is then used to analyze the $(1+3)$-vortex problem in detail.  \added{This analysis produces stable, fully asymmetric equilibria, and a number of examples are presented.}  We conclude with a discussion in Section \ref{sect6}.

\section{The $n$-vortex problem and relative equilibria\label{sect1}}
We begin by introducing the equations of motion for point vortices and defining relative equilibrium solutions in the $n$-vortex problem. Relative equilibria are periodic solutions where the vortices organize into a fixed shape that rotates rigidly around the center of vorticity (analogous to center of mass in the point mass gravitational problem). 
Let $q_i=(x_i, y_i) \in \mathbb{R}^2$ be the position of the $i$th vortex with circulation $\Gamma_i$. Let  $J= \left[\begin{array}{cc} 0 & 1 \\ -1 & 0 \end{array} \right]$, and let $\nabla_i$ be the two-dimensional partial gradient with respect to $q_i$. The equations of motion for $n$ vortices are a Hamiltonian system

\begin{equation}\Gamma_i \dot{q}_i = J \nabla_i H(q) \label{eq nvortex}
\end{equation}
 where $\displaystyle H(q)= - \sum_{i<j} \Gamma_i \Gamma_j \log |q_i-q_j|$. 
 
\begin{remark} \added{The point vortex equations are obtained from the vorticity equation for a two-dimensional inviscid fluid by taking the vorticity distribution to be a finite collection of Dirac masses, the ``point vortices'' (see \cite{marchioro1993vortices}).  In contrast to the Newtonian gravitational problem, the conjugate variables for the point vortex equations are the planar position variables (rather than position and momentum) and the system has half the dimension of the n-body problem.}\end{remark}

\begin{definition} A relative equilibrium solution of the $n$-vortex problem with center of vorticity at the origin is a periodic solution with period $2\pi/\omega$ where 
\begin{equation}q_i(t)=e^{-\omega Jt} q_i(0),\; i=1,...,n \end{equation} and
$e^{-\omega Jt}=R^{-1}(t)=\left[\begin{array}{cc} \cos(\omega t) & -\sin(\omega t) \\ \sin (\omega t) & \cos(\omega t) \end{array}\right]$. \end{definition}

Since relative equilibrium configurations appear fixed when viewed from a uniformly rotating coordinate system with rate $\omega$, it is useful to rewrite the equations in these coordinates.  Let
 $\zeta_i(t)=R(t)q_i(t)$. Then 

\begin{align*}\dot{\zeta}_i&=\dot{R}q_i+R\dot{q}_i\\
&=\dot{R}R^{-1}\zeta_i + RJ \tfrac{1}{\Gamma_i}\nabla_i H(q)\nonumber\\
&=\omega J \zeta_i + \tfrac{1}{\Gamma_i}J \nabla_i H(Rq)\nonumber\\
&=\omega J \zeta_i + \tfrac{1}{\Gamma_i}J \nabla_i H(\zeta)
\end{align*}
where in the third line we have used the observation that $R$ and $J$ commute, and that $H$ is invariant with respect to rotations. Thus relative equilibria of \eqref{eq nvortex} are fixed points of the system
\begin{equation} \Gamma_i \dot{\zeta}_i=\omega \Gamma_i J \zeta_i+J \nabla_i H(\zeta_i) \label{rotatingcoords}\end{equation}

\noindent or, more explicitly, they are solutions of
\begin{equation}\omega \Gamma_i \zeta_i^{\perp} + \sum_{j\neq i} \frac{\Gamma_i \Gamma_j (\zeta_j-\zeta_i)^{\perp}}{|\zeta_i-\zeta_j|^2}=0, \label{cc equation} \end{equation}
where $(x,y)^\bot=(-y,x)$.

\section{The $(1+N)$-Vortex Problem \label{sect2}}

In this section, we simplify the question of existence and stability of relative equilibria by exploiting the organizing effect of a single dominant vortex on vortices with infinitesimal strength. We reduce the problem to characterizing critical points of a particular real-valued function $V$ defined on a circle. This function is identified in Theorem 1 and its relationship to existence of relative equilibria is established in Theorem 2. Theorem 3 relates eigenvalues of a weighted Hessian matrix of $V$ to linear stability. This section is a generalization of \cite{barry2012relative} that allows us to introduce the problem of interest and to demonstrate how circulations with varying sizes and signs affect the analysis.  We also introduce a more convenient coordinate system.  \added{The obtained results form the starting point for an analysis of symmetry in the $(1+3)$-vortex problem in Section \ref{sect4}.}

To begin, we formalize what is meant by the phrase ``relative equilibrium of the $(1+N)$-vortex problem.''

\begin{definition} \label{RE def}Let \added{$\varepsilon_k$} be a sequence of real numbers such that \added{$\varepsilon_k \to 0$}
as $k\to \infty$ and let $q_0^k, ..., q_N^k$ be a sequence of relative equilibrium configurations of the $(N+1)$-vortex problem with circulations given by $\Gamma_0^k=1, \Gamma_i^k=\added{\varepsilon_k} \mu_i$, $i=1,...,N$. A \textit{relative equilibrium of the $(1+N)$-vortex problem} is a configuration $\bar{q}_0,...,\bar{q}_N$ such that there exists a sequence of relative equilibria of the $(N+1)$-vortex problem with $q_j^k \to \bar{q}_j$ as $k\to \infty$ (i.e. $\varepsilon_k\rightarrow 0$). \end{definition}

\begin{remark} \added{  The phrase ``$(1+N)$-vortex problem'' refers to a limiting case of the $n$-vortex problem with $n=N+1$, where one strong vortex interacts with $N$ vortices of infinitesimal circulation, and the small circulation parameter tends to zero. The notation is meant to distinguish it from the $(N+1)$-vortex problem; in the latter there is not necessarily a wide separation in relative vortex strengths.  While slightly confusing, the naming scheme is consistent with previous papers from vortex and celestial mechanics literature 
\cite{hall,barry2012relative,casasayas1994central}. }
\end{remark}

\begin{remark}
\added{Throughout the article, we denote a sequence of relative equilibria of the $(N+1)$-vortex problem by $q^k=(q_1^k,...,q_n^k)$, and the limiting configuration by $\bar{q}=(\bar{q}_1,...,\bar{q}_N)$. This notation will carry over to other coordinate systems.}\deleted{ Other terms depending on $\varepsilon_k$ will be denoted with a $k$ subscript. We include the dependence on $k$ through the paper to emphasize the dependence of sequences on $\varepsilon_k$.  }
\end{remark}

\added{  By defining relative equilibria of the $(1+N)$-vortex problem in terms of the infinitesimal circulation limit of a sequence of solutions to \eqref{cc equation}, we retain information about weak vortex-vortex interaction even in the limit $\varepsilon_k \rightarrow 0$. We will see that this enables us to pick out exactly those configurations which can be continued to nonzero circulation.  This approach is in contrast to the restricted problem where $N$ circulations are set to zero and the corresponding $N$ vortices become passive particles under the influence of a single strong vortex. In analogy with Hall's observation for the $(1+N)$-body problem \cite{hall}, the latter setting produces $N$ decoupled two-vortex problems for the interaction of a small vortex with the strong vortex. In general, relative equilibria of the restricted problem will not give rise to relative equilibria of \eqref{cc equation} with nonzero circulation. }

It is possible that as $\varepsilon_k \to 0$, two or more vortices converge to the same limiting position. 
However, we will restrict our study to configurations that do not collide in the limit, and so we require that vortices are bounded away from each other by some $m>0$, i.e. $|q_i-q_j|>m$ for $i\neq j$.  Note that while $m$ may depend on $N$, it is independent of $\varepsilon$.

\begin{remark} \added{In Section \ref{subsect:exist} we will derive a real-valued function of $N$ variables whose nondegenerate critical points are relative equilibria of the $(1+N)$-vortex problem, i.e. sequences of relative equilibria of the $(N+1)$-vortex problem converge to these critical points as $\varepsilon_k\rightarrow 0$, and no two vortices collide in the limit.  We can already identify one such example: if N vortices of equal strength $\Gamma^k=\mu \varepsilon_k$ are placed at the vertices of a regular polygon centered on the remaining vortex with $\Gamma_0=1$, then the system is in relative equilibrium for each $k$. This is the so-called $(N+1)$-gon family.  A straightforward calculation relates the radius $R_k$ of the configuration to $\varepsilon_k$ and the rotation rate $\omega$; one finds $\omega R_k^2=\frac{\mu \varepsilon_k}{2}(N-1)+1$, and so $R_k=\frac{1}{\sqrt{\omega}}+\mathcal{O}(\varepsilon_k)$.  We will see that this radial expansion is not unique to the $(N+1)$-gon configuration, and that convergent sequences of relative equilibria have infinitesimal vortices tending to a circle centered on the strong vortex with 
rate $\mathcal{O}(\varepsilon_k)$ as $\varepsilon_k\rightarrow 0$. }\end{remark}  

\subsection{Heliocentric coordinates }
We now develop a coordinate system that is particularly suited to the problem at hand.
Heliocentric coordinates are often used in the $n$-body problem with one big mass (usually the sun, hence the name ``helio") and several small masses. 
Here, this change of coordinates eliminates the strong vortex from the Hamiltonian equation, thus reducing the number of dimensions by two. This is equivalent to reducing by the integral for center of vorticity. 

The change of coordinates is $Z_0=q_0$ and $Z_i=q_i-q_0$ for $i=1,...,N$, with inverse transformation $F: Z\to q$ given by $q_0=Z_0$ and $q_i=Z_i+Z_0$. The pullback of the symplectic form $\Omega = \sum \Gamma_i dy_i \wedge dx_i$ for this coordinate change is 
\begin{equation*}F^*\Omega=\Gamma_T dY_0 \wedge dX_0 +\sum_{i=1}^N \Gamma_i dY_i\wedge dX_i + \sum_{i=1}^N \Gamma_i (dY_0 \wedge dX_i + dY_i \wedge dX_0) \end{equation*}
where $\Gamma_T=\sum_{i=0}^N \Gamma_i$ is the total circulation.
Let $A_1$  be the $(2N+2)\times (2N+2)$ matrix representation of this symplectic form. The equations of motion for the $(N+1)$-vortex problem  are then 
  \begin{equation*}\dot{Z}=A_1^{-1}\nabla H(Z), \quad H(Z)=-\sum_{j=1}^N \Gamma_0 \Gamma_j \log|Z_j| - \sum_{i<j}\Gamma_i \Gamma_j \log|Z_i-Z_j|\end{equation*}
The coordinates $Z_0$ do not appear in the Hamiltonian.  By fixing the center of vorticity at the origin, $Z_0=-\sum_{i=1}^N \Gamma_i Z_i$, we see that the motion of the strong vortex can be recovered from the motions of the weak vortices.  Thus we can ignore the equations for $Z_0$ entirely and study the remaining system of $2N$ equations. Let $A$ be the lower right $2N \times 2N$ block of $A_1^{-1}$. Then
\begin{equation*}\dot{Z}=A\nabla H(Z). \end{equation*}

In coordinates with $\Gamma_0=1$ and $\Gamma_i=\varepsilon \mu_i$ for $i=1,..,N$, the equations of motion become \begin{equation}\label{eqn:helio}\dot{Z}_i= (1+\varepsilon \mu_i)\frac{Z_i^{\perp}}{|Z_i|^2}+\varepsilon\sum_{j\neq i}\mu_j \left(\frac{ Z_j^{\perp}}{|Z_j|^2} +  \frac{(Z_i-Z_j)^{\perp}}{|Z_i-Z_j|^2} \right).\end{equation}

\subsection{\label{subsect:exist}Existence of Relative Equilibria in the $(1+N)$-vortex problem}

In the following lemma, we prove that relative equilibria of the $(1+N)$-vortex problem exist, i.e. there are sequences of relative equilibria of the $(N+1)$-vortex problem that converge in the limit $\varepsilon_k\rightarrow 0$.

\begin{lem} \label{bounded lemma}
Let $Z^k=(Z_0^k,...,Z_N^k)$ \added{denote a sequence of relative equilibria of \eqref{eqn:helio}} with angular frequency $\omega$, $\Gamma_0=1$, $\Gamma_i=\varepsilon_k \mu_i$, $\varepsilon_k\rightarrow 0$ as $k\rightarrow\infty$.  Then $Z^k$ is bounded, hence there is a subsequence converging to a relative equilibrium of the $(1+N)$-vortex problem.
\end{lem}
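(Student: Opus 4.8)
The plan is to convert the vector relative-equilibrium system into one scalar ``radial'' identity per weak vortex and then bound each $|Z_i^k|$ by a quadratic inequality whose coefficients degenerate benignly as $\varepsilon_k\to 0$. A relative equilibrium of \eqref{eqn:helio} with frequency $\omega$ is a fixed point in the uniformly rotating frame, i.e. $\dot Z_i=\omega Z_i^{\perp}$ for $i=1,\dots,N$; substituting this into \eqref{eqn:helio} gives the equilibrium equations. I would then take the Euclidean inner product of the $i$th equation with $Z_i^{\perp}$. Because $v\mapsto v^{\perp}$ is orthogonal, $\langle a^{\perp},b^{\perp}\rangle=\langle a,b\rangle$, so the two diagonal terms collapse and one is left with
\begin{equation*}
\omega|Z_i|^2=(1+\varepsilon_k\mu_i)+\varepsilon_k\sum_{j\neq i}\mu_j\left(\frac{Z_i\cdot Z_j}{|Z_j|^2}+\frac{Z_i\cdot(Z_i-Z_j)}{|Z_i-Z_j|^2}\right).
\end{equation*}
This is the key reduction: the dominant vortex contributes the order-one constant $1$, while every weak interaction carries a factor $\varepsilon_k$.

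To close the estimate I would use the noncollision hypothesis, which in heliocentric coordinates reads $|Z_i|>m$ and $|Z_i-Z_j|>m$. Although $Z_i\cdot Z_j$ could a priori be as large as $|Z_i|\,|Z_j|$, the two interaction terms are only \emph{linear} in $|Z_i|$: Cauchy--Schwarz gives $|Z_i\cdot Z_j|/|Z_j|^2\le |Z_i|/m$ and $|Z_i\cdot(Z_i-Z_j)|/|Z_i-Z_j|^2\le |Z_i|/m$. Writing $r_i=|Z_i|$ and $M=\sum_j|\mu_j|$, the scalar identity yields, for every $i$ and every $k$,
\begin{equation*}
\omega r_i^2\le 1+\varepsilon_k M+\frac{2\varepsilon_k M}{m}\,r_i .
\end{equation*}
Read as a quadratic inequality in $r_i$ (with $\omega>0$, which the leading balance $\omega r_i^2=1+\mathcal{O}(\varepsilon_k)$ forces for any noncolliding limit), this bounds $r_i$ above by the larger root, a quantity that converges to $1/\sqrt{\omega}$ as $\varepsilon_k\to 0$. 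Hence the weak-vortex positions are uniformly bounded for all large $k$, and the finitely many remaining terms are individually finite; since $Z_0^k=-\sum_{i\ge 1}\Gamma_i Z_i^k=\mathcal{O}(\varepsilon_k)$, the entire sequence $Z^k$ is bounded.

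Boundedness then produces a convergent subsequence by Bolzano--Weierstrass, and the pairwise lower bounds $|Z_i-Z_j|>m$, $|Z_i|>m$ survive in the limit, so no collision occurs; convergence transfers from $Z$ to $q_i=Z_i+Z_0$ by continuity, and by Definition \ref{RE def} the limit is a relative equilibrium of the $(1+N)$-vortex problem. I expect the main obstacle to be precisely the cross terms $Z_i\cdot Z_j$: at first glance they threaten a contribution quadratic in $\max_i r_i$ that could swamp the $\omega r_i^2$ term, and the heart of the argument is recognizing that the noncollision denominators reduce them to linear order, so that the quadratic inequality genuinely controls each $r_i$. A secondary point requiring care is the justification that $\omega>0$ (equivalently, ruling out escape to infinity), which the same scalar relation supplies.
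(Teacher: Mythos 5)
Your proof is correct and takes essentially the same route as the paper's: the paper multiplies the rotating-frame equilibrium equation by $J$ to obtain $\omega\xi_i^k$ equal to the interaction terms, takes norms, and bounds those terms by $2\varepsilon_k\max_j|\mu_j|(N-1)/m$ using the noncollision bounds $|\xi_j^k|>m$, $|\xi_i^k-\xi_j^k|>m$; your scalar identity is just the inner product of that vector identity with $Z_i$, and your quadratic inequality is the paper's inequality multiplied by $|Z_i|$. The only notable difference is bookkeeping --- you conclude directly from the quadratic bound while the paper argues by contradiction --- and both arguments implicitly require $\omega>0$, a point you flag explicitly while the paper leaves it silent.
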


\begin{proof}
 Let $\xi_j^k=e^{-i \omega \, t}Z_j^k$ so that $\xi^k=(\xi_1^k,\xi_2^k,...,\xi_N^k)$ is a fixed point of \eqref{eqn:helio} when written in rotating coordinates. We have
\begin{equation*}\dot{\xi}_i^k=\omega J \xi^k_i + (1+\varepsilon_k \mu_i)\frac{(\xi_i^k)^{\perp}}{|\xi_i^k|^2}+\varepsilon_k \sum_{j\neq i} \mu_j\left(\frac{(\xi_j^k)^{\perp}}{|\xi_j^k|^2}+ \frac{(\xi_i^k-\xi_j^k)^{\perp}}{|\xi_i^k-\xi_j^k|^2}\right). \end{equation*}
Multiplying the relative equilibrium equations by $J$ and using the observations that $J^2=-I$, $J(\xi_i^k)^\bot=\xi_i^k$ yields
\begin{align*}
0&= -\omega\xi_i^k+(1+\varepsilon_k \mu_i)\frac{\xi_i^k}{|\xi_i^k|^2}+\varepsilon_k \sum_{j\neq i}\mu_j \left(\frac{\xi_j^k}{|\xi_j^k|^2}+ \frac{(\xi_i^k-\xi_j^k)}{|\xi_i^k-\xi_j^k|^2}\right)\\
\Rightarrow \omega\xi_i^k&=(1+\varepsilon_k \mu_i)\frac{\xi_i^k}{|\xi_i^k|^2}+\varepsilon_k \sum_{j\neq i} \mu_j\left(\frac{\xi_j^k}{|\xi_j^k|^2}+ \frac{\xi_i^k-\xi_j^k}{|\xi_i^k-\xi_j^k|^2}\right).
\end{align*}
Then 
\begin{align}\label{eq:xi}
\omega|\xi_i^k|&\leq(1+\varepsilon |\mu_i|)\frac{1}{|\xi_i^k|}+\varepsilon_k \sum_{j\neq i} |\mu_j|\left(\frac{1}{|\xi_j^k|}+ \frac{1}{|\xi_i^k-\xi_j^k|}\right).
\end{align}

Suppose, by way of contradiction, that there is a subsequence $|\xi^k|\rightarrow\infty$. Then $|\xi_i^k|\rightarrow \infty$ for some $i$. In this case the first term on the right hand side of the inequality \eqref{eq:xi} tends to zero. Since $|\xi_i^k|=|Z_i^k|=|q_i^k -q_0^k|>m$ and $|\xi_i^k-\xi_j^k|=|q_i^k-q_0^k-q_j^k+q_0^k|>m$ \added{for all k}, we see that the second term is bounded:

\begin{equation}\label{eq:bound}
\varepsilon_k \sum_{j\neq i} |\mu_j|\left(\frac{1}{|\xi_j^k|}+ \frac{1}{|\xi_i^k-\xi_j^k|}\right)\leq \frac{2 \varepsilon_k\max\{|\mu_j|\}(N-1)}{m}.
\end{equation}

Thus $|\xi_i^k|$ is bounded for all $k$, a contradiction.

\end{proof}
Since relative equilibria rotate rigidly about the center of vorticity (taken to be zero), we expect the strong vortex to be near the rotational center of the configuration. The next lemma states that this vortex limits to the center of vorticity as $\varepsilon_k\to 0$, and that the infinitesimal vortices limit to a circle centered on the strong vortex.  If we assume that relative equilibria rotate with angular frequency 1, then this picks out the unit circle as the limiting location. \deleted{ The lemma also gives rates of convergence to the circle, which will be important for stability. }

\begin{lem} \label{size lemma}
Set $\omega=1$ and let $Z^k=(Z_0^k,...,Z_N^k)$ be a sequence of relative equilibria of the $(N+1)$-vortex problem that converges to a relative equilibrium $\bar{Z}_0,...,\bar{Z}_N$ of the $(1+N)$-vortex problem as $k\rightarrow\infty$.  Then $|Z_0^k|=\mathcal{O}(\varepsilon_k)$, $|\bar{Z}_0|=0$, $|Z_i^k|^2=1+\mathcal{O}(\varepsilon_k)$, and $|\bar{Z}_i|=1$ for $i=1,...,N$.  
\end{lem}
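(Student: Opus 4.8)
My plan is to treat the strong vortex and the weak vortices separately, since the two assertions about $Z_0^k$ come directly from the center-of-vorticity constraint, while those about $Z_i^k$ require the relative equilibrium equations.

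First I would handle $Z_0^k$ and $\bar{Z}_0$. Recall that fixing the center of vorticity at the origin gives $Z_0^k=-\sum_{i=1}^N \Gamma_i^k Z_i^k=-\varepsilon_k\sum_{i=1}^N \mu_i Z_i^k$. Because the sequence $Z^k$ converges to $\bar{Z}$, each $|Z_i^k|$ is bounded uniformly in $k$ (this is also guaranteed by Lemma \ref{bounded lemma}), so $\sum_{i=1}^N|\mu_i|\,|Z_i^k|\le C$ for some constant $C$ independent of $k$. Hence $|Z_0^k|\le C\varepsilon_k$, which is exactly $|Z_0^k|=\mathcal{O}(\varepsilon_k)$. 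Letting $k\to\infty$ and using $\varepsilon_k\to 0$ then forces $\bar{Z}_0=0$, giving $|\bar{Z}_0|=0$.

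Next I would extract the magnitude $|Z_i^k|$ from the relative equilibrium equations. Working in the rotating coordinates $\xi_i^k=e^{-i\omega t}Z_i^k$ (so that $|\xi_i^k|=|Z_i^k|$) and setting $\omega=1$, I use the fixed-point form derived in the proof of Lemma \ref{bounded lemma},
\begin{equation*}
\xi_i^k=(1+\varepsilon_k\mu_i)\frac{\xi_i^k}{|\xi_i^k|^2}+\varepsilon_k\sum_{j\neq i}\mu_j\left(\frac{\xi_j^k}{|\xi_j^k|^2}+\frac{\xi_i^k-\xi_j^k}{|\xi_i^k-\xi_j^k|^2}\right).
\end{equation*}
Taking the Euclidean inner product of both sides with $\xi_i^k$, the first term collapses to $(1+\varepsilon_k\mu_i)$ and I obtain
\begin{equation*}
|\xi_i^k|^2=1+\varepsilon_k\mu_i+\varepsilon_k\sum_{j\neq i}\mu_j\left(\frac{\xi_j^k\cdot\xi_i^k}{|\xi_j^k|^2}+\frac{(\xi_i^k-\xi_j^k)\cdot\xi_i^k}{|\xi_i^k-\xi_j^k|^2}\right).
\end{equation*}
I would then bound each bracketed term by Cauchy--Schwarz, using $|\xi_j^k\cdot\xi_i^k|/|\xi_j^k|^2\le |\xi_i^k|/|\xi_j^k|$ and $|(\xi_i^k-\xi_j^k)\cdot\xi_i^k|/|\xi_i^k-\xi_j^k|^2\le |\xi_i^k|/|\xi_i^k-\xi_j^k|$, together with the no-collision separation $|\xi_j^k|>m$ and $|\xi_i^k-\xi_j^k|>m$ and the uniform boundedness of $|\xi_i^k|$. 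This makes the entire sum $\mathcal{O}(1)$, so the last two terms are $\mathcal{O}(\varepsilon_k)$ and hence $|Z_i^k|^2=|\xi_i^k|^2=1+\mathcal{O}(\varepsilon_k)$.

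Finally, passing to the limit $k\to\infty$ in this last identity and using $\varepsilon_k\to 0$ yields $|\bar{Z}_i|^2=1$, i.e.\ $|\bar{Z}_i|=1$ for each $i$, completing all four claims. The only point requiring care—and the main obstacle—is verifying that the cross-term estimates are uniform in $k$; this hinges on two independent inputs, namely that the limiting configuration is bounded (so the numerators stay controlled) and that the collision-avoidance constant $m>0$ is independent of $\varepsilon_k$, which is precisely the standing hypothesis $|q_i-q_j|>m$. Provided those are in hand, the remaining steps are routine.
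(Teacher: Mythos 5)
Your proof is correct and is essentially the argument the paper intends: the paper itself omits the proof, deferring to Lemma 1 of \cite{barry2012relative}, and your argument is exactly the straightforward generalization alluded to there. The center-of-vorticity identity $Z_0^k=-\tfrac{\varepsilon_k}{\Gamma_T}\sum_{i}\mu_i Z_i^k$ (you used the paper's version without the harmless factor $1/\Gamma_T$, which changes nothing since $\Gamma_T\to 1$) disposes of the strong vortex, while dotting the rotating-frame fixed-point equation from Lemma \ref{bounded lemma} with $\xi_i^k$ and bounding the cross terms via Cauchy--Schwarz, the uniform separation $m$, and boundedness of the convergent sequence yields $|Z_i^k|^2=1+\mathcal{O}(\varepsilon_k)$ and hence $|\bar{Z}_i|=1$.
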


\added{The proof of Lemma \eqref{size lemma} is a straightforward generalization of Lemma 1 in \cite{barry2012relative}, and we do not include it here.}

Let $\theta=(\theta_1,...\theta_N)$ be an $N$-dimensional vector with entries $\theta_i \in [0,2\pi]$. In the next theorem we identify the function, $V(\theta)$, whose critical points characterize existence and stability of relative equilibria in the problem of one strong and $N$ weak vortices.  In particular, Lemma 2 and Theorem \ref{thm:V} imply that limiting configurations corresponding to relative equilibria of the $(1+N)$-vortex problem have ``weak'' (zero strength in the limit) vortices distributed on the unit circle with angular positions given by critical points of $V(\theta)$.  \added{In later sections, $V$ will be used to simplify the symmetry problem. }

\begin{thm} \label{thm:V}If $(\bar{r},\bar{\theta})=(1,...,1,\bar{\theta}_1,...\bar{\theta}_N)$ is a relative equilibrium \added{(in polar coordinates)} of the $(1+N)$-vortex problem, then $\bar{\theta}$ is a critical point of the function 
 \begin{equation}V(\theta)=-\sum_{i<j} \mu_i \mu_j [ \cos(\theta_i-\theta_j)+\tfrac12 \log(2-2\cos(\theta_i-\theta_j))] \end{equation}
\end{thm}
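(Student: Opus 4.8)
The plan is to work directly with the relative-equilibrium (fixed-point) equation in the rotating frame that was derived in the proof of Lemma~\ref{bounded lemma}, namely
\begin{equation*}
\xi_i^k=(1+\varepsilon_k\mu_i)\frac{\xi_i^k}{|\xi_i^k|^2}+\varepsilon_k\sum_{j\neq i}\mu_j\left(\frac{\xi_j^k}{|\xi_j^k|^2}+\frac{\xi_i^k-\xi_j^k}{|\xi_i^k-\xi_j^k|^2}\right),
\end{equation*}
specialized to $\omega=1$, and to isolate its angular (tangential) component. The key observation is that the two ``self'' terms $\xi_i^k$ and $(1+\varepsilon_k\mu_i)\xi_i^k/|\xi_i^k|^2$ are both parallel to $\xi_i^k$, so taking the inner product of the whole equation with the tangential vector $(\xi_i^k)^{\perp}$ annihilates them exactly, for every $k$. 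What remains carries an overall factor of $\varepsilon_k$, which I would cancel (it is nonzero for each finite $k$); using $(\xi_i^k-\xi_j^k)\cdot(\xi_i^k)^{\perp}=-\xi_j^k\cdot(\xi_i^k)^{\perp}$ this leaves
\begin{equation*}
0=\sum_{j\neq i}\mu_j\left(\frac{\xi_j^k\cdot(\xi_i^k)^{\perp}}{|\xi_j^k|^2}-\frac{\xi_j^k\cdot(\xi_i^k)^{\perp}}{|\xi_i^k-\xi_j^k|^2}\right),\qquad i=1,\dots,N.
\end{equation*}

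Next I would pass to the limit $k\to\infty$. By Lemma~\ref{size lemma} the limiting configuration sits on the unit circle, $|\bar{Z}_i|=1$, and by the non-collision hypothesis the separations $|\xi_i^k-\xi_j^k|$ stay bounded below by $m$, so every summand is a continuous function of the positions and the limit may be taken term by term. Writing the limiting positions in polar form with angles $\bar\theta_i$, I would evaluate the inner products explicitly via $\xi_j\cdot\xi_i^{\perp}=\sin(\theta_j-\theta_i)=-\sin(\theta_i-\theta_j)$, $|\xi_j|^2=1$, and $|\xi_i-\xi_j|^2=2-2\cos(\theta_i-\theta_j)$, turning the angular balance into
\begin{equation*}
0=\sum_{j\neq i}\mu_j\left(-\sin(\bar\theta_i-\bar\theta_j)+\frac{\sin(\bar\theta_i-\bar\theta_j)}{2-2\cos(\bar\theta_i-\bar\theta_j)}\right).
\end{equation*}

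Finally I would compute the partial derivative of $V$ directly and recognize it as exactly this balance up to a factor. Differentiating the $(i,j)$ summand gives $\partial_{\theta_i}\cos(\theta_i-\theta_j)=-\sin(\theta_i-\theta_j)$ and $\partial_{\theta_i}\tfrac12\log(2-2\cos(\theta_i-\theta_j))=\sin(\theta_i-\theta_j)/(2-2\cos(\theta_i-\theta_j))$, so that
\begin{equation*}
\frac{\partial V}{\partial\theta_i}=\mu_i\sum_{j\neq i}\mu_j\left(\sin(\theta_i-\theta_j)-\frac{\sin(\theta_i-\theta_j)}{2-2\cos(\theta_i-\theta_j)}\right),
\end{equation*}
which is precisely $-\mu_i$ times the angular balance above and hence vanishes at $\bar\theta$, making $\bar\theta$ a critical point of $V$. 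I expect the only real care to be needed in the limiting step: one must confirm that cancelling $\varepsilon_k$ \emph{before} passing to the limit is legitimate, which it is, since the angular projection of the self-terms is identically zero for each finite $k$ and the remaining denominators are uniformly bounded away from zero, so no hidden $\mathcal{O}(\varepsilon_k)$ corrections from Lemma~\ref{size lemma} contaminate the limit. The algebraic matching of $\partial V/\partial\theta_i$ to the tangential equation is then a routine differentiation.
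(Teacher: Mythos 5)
Your proposal is correct and takes essentially the same route as the paper: the paper imposes $Z_i^k\cdot\dot{Z}_i^k=0$ (no radial velocity under rigid rotation) on the heliocentric equations of motion, which by skew-symmetry of $J$ is exactly your tangential projection of the rotating-frame fixed-point equation, and both arguments then cancel the common factor of $\varepsilon_k$, pass to the limit using Lemma \ref{size lemma}, and identify the limiting angular balance with $\mu_i^{-1}\partial V/\partial\theta_i=0$.
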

\begin{proof} Let $Z^{k}=(Z_0^k,...,Z_N^k)$ be a sequence of relative equilibria of the $(N+1)$-vortex problem in heliocentric coordinates that converges to a relative equilibrium $\bar{Z}=(0,\bar{Z}_1,...,\bar{Z}_N)$ of the $(1+N)$-vortex problem as $\varepsilon_k\rightarrow 0$. Let $(r_i^k \cos(\theta_i^k),r_i^k \sin(\theta_i^k))$, $i=1,...,N$ be the polar coordinate representation of $Z_i^{k
}$ \added{and $(\bar{r},\bar{\theta})$ the polar coordinate representation of $\bar{Z}$}. Since relative equilibria rotate rigidly around the center of vorticity \added{(at the origin), we must have $Z_i^k\cdot \dot{Z_i^k}=0$.} Further,

\begin{align*} Z_i^k \cdot \dot{Z}_i^k  &=
 (1+\varepsilon_k \mu_i) \frac{Z_i^k \cdot (Z_i^k)^{\perp}}{|Z_i^k|^2} + \varepsilon_k \sum_{j\neq i} \mu_j \left( \frac{Z_i^k \cdot (Z_j^k)^{\perp}}{|Z_j^k|^2} + \frac{Z_i^k \cdot (Z_i^k-Z_j^k)^{\perp}}{|Z_i^k-Z_j^k|^2} \right)\\
 &=\varepsilon_k \sum_{j\neq i} \mu_j \left( \frac{Z_i^k \cdot (Z_j^k)^{\perp}}{|Z_j^k|^2} - \frac{Z_i^k \cdot (Z_j^k)^{\perp}}{|Z_i^k-Z_j^k|^2} \right)\\
 &=\varepsilon_k \sum_{j\neq i} \mu_j r_i^k r_j^k \sin (\theta_i^k -\theta_j^k) \left( \frac{1}{(r_j^k)^2} - \frac{1}{(r_i^k)^2+(r_j^k)^2-2r_i^k r_j^k \cos (\theta_i^k-\theta_j^k)}\right).
 \end{align*}

Dividing both sides by $\varepsilon_k$ and letting $r_i^k$, $r_j^k \to 1$ as $\varepsilon_k \to 0$ gives
\begin{equation*} 0=\sum_{j\neq i} \mu_j \sin(\bar{\theta}_i-\bar{\theta}_j) \left(1-\frac{1}{2-2\cos(\bar{\theta}_i-\bar{\theta}_j) }\right) , \; i=1,...,N \end{equation*}
Define $V(\theta)=-\sum \mu_i \mu_j \left[ \cos(\theta_i-\theta_j) +\tfrac12 \log(2-2\cos(\theta_i-\theta_j))\right].$ \\
Then $\bar{\theta}=(\bar{\theta}_1,...,\bar{\theta}_N)$ is a solution to the system $\mu_i^{-1} \dfrac{\partial V}{\partial \theta_i}=0$, $i=1,...,N$, and hence it also satisfies $\nabla V=0$.
\end{proof}

\added{
The next theorem states that a given critical point of $V$ is the limit of a sequence of relative equilibria of the $(N+1)$-vortex problem if it satisfies nondegeneracy conditions, thus providing a converse to Theorem \ref{thm:V}.  For the point vortex equations, an unavoidable degeneracy of $V$ comes from rotational symmetry, i.e. any rotation of a critical point of $V$ is again a critical point. Therefore, the Hessian matrix $V_{\theta\theta}$ will have at least one zero eigenvalue associated with the eigenvector $v_0=(1,1,...,1)$.  However, this eigenvalue is found to be harmless, and can be sidestepped by partitioning the nullspace of $V_{\theta\theta}$ into $\text{span}\{v_0\}$ and its complement.  Following the precedent set in \cite{barry2012relative} and \cite{moeckel1994linear}, we \textit{define} a critical point of $V$ to be \textit{nondegenerate} provided it has exactly one zero eigenvalue.  This restriction is enough to guarantee that nondegenerate critical points of $V$ can be continued to relative equilibria of the full point vortex equations via the Implicit Function Theorem.  }

\begin{thm}\label{thm:existence}
Suppose $\bar{\theta}=(\bar{\theta}_1,...,\bar{\theta}_N)$ is a nondegenerate critical point of $V$.  Then for $\bar{r}=(1,1,...,1)$, the configuration $(\bar{r},\bar{\theta})$ is a relative equilibrium of the $(1+N)$-vortex problem.
\end{thm}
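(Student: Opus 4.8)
The plan is to recognize relative equilibria as critical points of the rotation-invariant augmented Hamiltonian and to continue the critical point $\bar\theta$ of $V$ by the Implicit Function Theorem (IFT). Concretely, with $\omega=1$ a configuration is a relative equilibrium of the $(N+1)$-vortex problem precisely when $\nabla\mathcal{H}_\varepsilon=0$, where $\mathcal{H}_\varepsilon=H+\tfrac12\sum_i\Gamma_i|Z_i|^2$ is the augmented Hamiltonian written in heliocentric coordinates (the right-hand side of \eqref{eqn:helio} in the rotating frame vanishes iff $\nabla\mathcal{H}_\varepsilon=0$, since the reduced symplectic matrix is invertible). Passing to polar coordinates $Z_i=r_i(\cos\theta_i,\sin\theta_i)$, the rotation group acts by $\theta\mapsto\theta+c\mathbf{1}$ and $\mathcal{H}_\varepsilon$ is invariant under it. I would split $\nabla\mathcal{H}_\varepsilon=0$ into the radial gradient equations $\partial_{r_i}\mathcal{H}_\varepsilon=0$ and the angular gradient equations $\partial_{\theta_i}\mathcal{H}_\varepsilon=0$, and recall from the proof of Theorem \ref{thm:V} and from Lemma \ref{size lemma} that at $\varepsilon=0$ the former force $r_i=1$ while the latter reproduce $\nabla V=0$.

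First I would eliminate the radii. The radial block of the Hessian of $\mathcal{H}_\varepsilon$ near $(\mathbf{1},\bar\theta)$ equals $2\varepsilon\,\mathrm{diag}(\mu_1,\dots,\mu_N)+O(\varepsilon^2)$, so applying the IFT to the rescaled radial equations $\varepsilon^{-1}\partial_{r_i}\mathcal{H}_\varepsilon=0$ (whose Jacobian in $r$ is $2\,\mathrm{diag}(\mu_i)+O(\varepsilon)$, invertible as all $\mu_i\neq0$) solves for a smooth $r=\hat r(\theta,\varepsilon)$ with $\hat r(\theta,0)=\mathbf{1}$ and $\hat r=\mathbf{1}+O(\varepsilon)$, in agreement with Lemma \ref{size lemma}. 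Substituting yields the reduced, still rotation-invariant, function $W_\varepsilon(\theta)=\mathcal{H}_\varepsilon(\hat r(\theta,\varepsilon),\theta)$, whose critical points are exactly the relative equilibria: since $\partial_{r_i}\mathcal{H}_\varepsilon=0$ on the graph, the chain rule gives $\nabla W_\varepsilon=\nabla_\theta\mathcal{H}_\varepsilon|_{r=\hat r}$. Because every angle-dependent term of $\mathcal{H}_\varepsilon$ carries a factor $\varepsilon^2$ (the interaction terms $\Gamma_i\Gamma_j\log|Z_i-Z_j|$, and the center-of-vorticity cross terms in the angular impulse), the angular gradient vanishes to second order, and $G(\theta,\varepsilon):=\varepsilon^{-2}\nabla W_\varepsilon(\theta)$ extends smoothly to $\varepsilon=0$ with $G(\theta,0)=\nabla V(\theta)$ and $\partial_\theta G(\theta,0)=V_{\theta\theta}(\theta)$.

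The one genuine obstruction is the rotational zero eigenvalue of $V_{\theta\theta}$, associated with $v_0=(1,\dots,1)$. Following the remark preceding the theorem, I would quotient it out by restricting to a slice transverse to the rotation orbit, say $\theta_N=\bar\theta_N$, and apply the IFT to the $N-1$ equations $G_i(\theta,\varepsilon)=0$, $i=1,\dots,N-1$. These hold at $(\bar\theta,0)$ because $\bar\theta$ is a critical point of $V$, and the relevant $(N-1)\times(N-1)$ Jacobian is the principal submatrix $B$ of $V_{\theta\theta}(\bar\theta)$ with row and column $N$ deleted. Here is where nondegeneracy (exactly one zero eigenvalue) enters, through the linear-algebra fact that $B$ is invertible: if $Bw=0$, extend $w$ by a zero in slot $N$ to $\tilde w$; then $V_{\theta\theta}\tilde w=c\,e_N$ for some scalar $c$, and since $V_{\theta\theta}$ is symmetric its range lies in $v_0^\perp$ while $e_N\notin v_0^\perp$, forcing $c=0$; whence $\tilde w\in\ker V_{\theta\theta}=\mathrm{span}\{v_0\}$, and as $\tilde w_N=0$ while $(v_0)_N\neq0$ this gives $\tilde w=0$ and $w=0$. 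The IFT then produces a smooth family $(\theta_1(\varepsilon),\dots,\theta_{N-1}(\varepsilon))\to(\bar\theta_1,\dots,\bar\theta_{N-1})$ of solutions on the slice.

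Finally I would upgrade these slice solutions to genuine relative equilibria. Because $W_\varepsilon$ is rotation-invariant, $\sum_i\partial_{\theta_i}W_\varepsilon\equiv0$, hence $\sum_i G_i\equiv0$, so once $G_1=\dots=G_{N-1}=0$ the remaining component $G_N$ vanishes automatically; together with $\partial_{r_i}\mathcal{H}_\varepsilon=0$ on the graph this gives $\nabla\mathcal{H}_\varepsilon=0$, i.e.\ a relative equilibrium of the $(N+1)$-vortex problem for each small $\varepsilon>0$. Taking $\varepsilon=\varepsilon_k\to0$ yields the sequence required by Definition \ref{RE def}, with radii tending to $1$ and angles to the distinct $\bar\theta_i$ (so no collisions), converging to $(\bar r,\bar\theta)$. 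I expect the main difficulty to be making the symmetry reduction honest rather than the IFT itself: one must ensure that the single equation rendered redundant by rotational invariance is exactly the one discarded by the slice. This is why it is essential to argue within the variational formulation $\nabla\mathcal{H}_\varepsilon=0$, for which the exact identity $\sum_i\partial_{\theta_i}W_\varepsilon\equiv0$ holds for all $\varepsilon$; the individual flow components of \eqref{eqn:helio} satisfy no such clean dependency (for instance $\sum_i\mu_i$ times the angular equation is only $O(\varepsilon)$, not identically zero), so a naive count of equations on the slice would fail.
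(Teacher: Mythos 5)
Your overall architecture is the right one, and it is in the spirit of the proof the paper defers to (the paper gives no self-contained argument, only a citation of Theorem 1 in \cite{barry2012relative}): a variational characterization of relative equilibria, elimination of the radii by the Implicit Function Theorem, an $\varepsilon^{-2}$ rescaling of the angular gradient so that its limit is $\nabla V$, and a symmetry slice $\theta_N=\bar{\theta}_N$ on which nondegeneracy gives an invertible Jacobian, with the discarded equation recovered from $\sum_i \partial_{\theta_i}W_\varepsilon\equiv 0$. Your linear-algebra step (that $\ker V_{\theta\theta}=\mathrm{span}\{v_0\}$ forces the deleted-row-and-column submatrix $B$ to be invertible, via $\mathrm{range}(V_{\theta\theta})=v_0^{\perp}\not\ni e_N$) is correct and is exactly the point where nondegeneracy is used.

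There is, however, a genuine error in your central object: in the reduced heliocentric coordinates, the critical points of $\mathcal{H}_\varepsilon=H+\tfrac12\sum_i\Gamma_i|Z_i|^2$ are \emph{not} the relative equilibria. The generator of rotations on the reduced space is the angular impulse restricted to the zero center-of-vorticity level set,
\begin{equation*}
I_{\mathrm{red}}=\tfrac12\sum_{i=1}^{N}\Gamma_i|Z_i|^2-\frac{1}{2\Gamma_T}\left|\sum_{i=1}^{N}\Gamma_iZ_i\right|^2,
\qquad \nabla_i I_{\mathrm{red}}=\Gamma_i(Z_i+Z_0)=\Gamma_i q_i,
\end{equation*}
not $\tfrac12\sum_i\Gamma_i|Z_i|^2$, whose gradient is $\Gamma_iZ_i$. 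At an actual relative equilibrium one finds $\nabla_i\mathcal{H}_\varepsilon=-\Gamma_iZ_0\neq 0$, a discrepancy of size $\mathcal{O}(\varepsilon^2)$ --- precisely the order your argument retains after dividing by $\varepsilon^2$, so it cannot be absorbed into error terms. Concretely, with your $\mathcal{H}_\varepsilon$ the rescaled angular gradient converges to the gradient of $-\tfrac12\sum_{i<j}\mu_i\mu_j\log\left(2-2\cos(\theta_i-\theta_j)\right)$ only: the $\cos(\theta_i-\theta_j)$ terms of $V$, which encode the offset of the strong vortex from the rotation center, are exactly the contribution of the omitted cross term, and the two limit functions have different critical sets (already for $N=2$: angle difference $\pi$ only, versus $\pi$ and $\pm\pi/3$ for $V$). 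So as written both of your key claims --- ``$\nabla\mathcal{H}_\varepsilon=0$ iff relative equilibrium'' and ``$G(\theta,0)=\nabla V(\theta)$'' --- fail. Your own parenthetical about ``center-of-vorticity cross terms in the angular impulse'' shows you had the correct functional in mind, but the displayed definition contains no such terms. Replacing $\tfrac12\sum_i\Gamma_i|Z_i|^2$ by $I_{\mathrm{red}}$ repairs both steps (one checks directly that zeros of $\nabla(H+I_{\mathrm{red}})$ coincide with fixed points of the rotating-frame form of \eqref{eqn:helio}, the strong-vortex equation $Z_0=-\varepsilon\sum_j\mu_jZ_j/|Z_j|^2$ being implied by summation), after which the rest of your argument goes through.
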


\added{The proof follows easily from Theorem 1 in \cite{barry2012relative}.}

\subsection{Linear Stability \label{sect3}}

\added{The function $V$ can also be used to characterize stability of relative equilibria. In Section \ref{sect4}, this will be exploited to show existence of linearly stable asymmetric relative equilibria. The key ingredients for stability are eigenvalues of the ``weighted'' Hessian matrix $\mu^{-1}V_{\theta\theta}$, where $\mu=\text{diag}\{\mu_1,...,\mu_N\}$ is the diagonal matrix containing circulation weights.}

For this section, it is convenient to have rotating Heliocentric coordinates written in polar form.  \added{Let $\Gamma_0=1$, $\Gamma_i=\mu_i\varepsilon$. Then}
	\begin{align}
		\dot{r}_i&=\varepsilon \sum_{j\neq i} \mu_j r_j \sin (\theta_i-\theta_j)\left(\frac{1}{r_j^2}-\frac{1}{r_i^2+r_j^2-2r_i r_j \cos(\theta_i-\theta_j)} \right)\\
		\dot{\theta}_i&=-\omega+(1+\varepsilon \mu_i) \frac{1}{r_i^2} +\varepsilon \sum_{j\neq i} \mu_j \frac{r_i^2 r_j \cos(\theta_i-\theta_j)-r_i r_j^2 \cos (2(\theta_i-\theta_j))}{r_i r_j^2(r_i^2+r_j^2-2r_i r_j\cos(\theta_i-\theta_j))}.
	\end{align}
As before, we take $\omega=1$. 

\added{Let $(r^k, \theta^k)=(r_1^k,...,r_N^k,\theta_1^k,...,\theta_N^k)$ be a sequence of relative equilibria of the $(N+1)$-vortex problem which converges to a relative equilibrium $(\bar{r}, \bar{\theta})=(1,...,1,\bar{\theta}_1,...,\bar{\theta}_N)$ of the $(1+N)$-vortex problem as $\varepsilon_k \to 0$ (i.e. $k\to \infty$). By Theorem 1, $\bar{\theta}$ is a nondegenerate critical point of $V(\theta)$, and by Lemma \ref{size lemma}, $r_i^k=1+\mathcal{O}(\varepsilon_k)$ for $\varepsilon_k$ sufficiently small.  Using this, we find that the linearized Hamiltonian system is of the form $(\dot{\delta r}, \dot{\delta \theta})=M_k (\delta r, \delta \theta)$, where $M_k$ is a Hamiltonian matrix made of four $N \times N$ blocks: }

\begin{equation}M_k=\left( \begin{array}{cc}\varepsilon_k A_k + \mathcal{O}(\varepsilon_k^2) & \varepsilon_k \mu^{-1} V_{\theta\theta}(\theta^k) + \mathcal{O}(\varepsilon_k^2) \\
-2I + \mathcal{O}(\varepsilon_k) & \varepsilon_k D_k +\mathcal{O}(\varepsilon_k^2)\end{array} \right). \end{equation}
Here, $A_k$ and $D_k$ are $N\times N$ matrices of the form:

\begin{align} a_{ii}&=\sum_{j\neq i} \mu_j \frac{\sin(\theta_i^k-\theta_j^k)}{2-2\cos(\theta_i^k-\theta_j^k)} &d_{ii}&=-\sum_{j\neq i} \mu_j \sin(\theta_i^k-\theta_j^k)\\
a_{ij}&=-\mu_j \sin(\theta_i^k-\theta_j^k)&
d_{ij}&=-\mu_j \sin(\theta_i^k-\theta_j^k).
\end{align}

\added{The matrix $M$ has two zero eigenvalues corresponding to the two-dimensional invariant subspace spanned by $v_1=(0,..,0,1,..,1)\in \mathbb{C}^{2N}$ and $v_2=(1,...,1,0,...,0)\in \mathbb{C}^{2N}$, associated to rotational and scaling symmetries of the problem. Thus the diagonalization of $M_k$ has a nontrivial Jordan block, and a relative equilibrium is never a conventionally stable fixed point.  Following Moeckel \cite{moeckel1994linear}, we say a relative equilibrium is \textit{linearly stable} if $M_k$ has purely imaginary eigenvalues with no nontrivial Jordan blocks on a subspace that is skew-orthogonal to the subspace associated with symmetries.}

\added{The next theorem relates eigenvalues of $\mu^{-1}V_{\theta\theta}(\bar{\theta})$ to linear stability of relative equilibria.  }

\deleted{Note that $M$ is Hamiltonian on this complementary subspace in the sense that $\Omega(v,Mw)=-\Omega(Mv,w)$ where $\Omega$ is the polar rotating coordinates form of the skew inner product defined in \added{section on Heliocentric Coordinates}.}

\begin{thm} \label{stability theorem} \added{Let $(r^k, \theta^k)$ be a sequence of relative equilibria of the $(N+1)$-vortex problem that converges to a relative equilibrium $(\bar{r}, \bar{\theta})=(1,...,1,\bar{\theta}_1,...,\bar{\theta}_N)$ of the $(1+N)$-vortex problem as $\varepsilon_k \to 0$, and let $\bar{\theta}$ be a nondegenerate critical point of $V$.
For $\varepsilon_k$ sufficiently small, $(r^k,\theta^k)$ is nondegenerate and is linearly stable if and only if $\mu^{-1}V_{\theta\theta}(\bar{\theta})$ has $N-1$ positive eigenvalues.}
\end{thm}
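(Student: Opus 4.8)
The plan is to treat the eigenvalue problem for $M_k$ as a singular perturbation in $\varepsilon_k$, extract a leading-order problem whose spectrum is governed by $\mu^{-1}V_{\theta\theta}(\bar\theta)$, and then use the Hamiltonian structure to transfer the conclusion back to the full matrix. First I would rebalance the blocks of $M_k$ by conjugating with the real diagonal scaling $S=\mathrm{diag}(\sqrt{\varepsilon_k}\,I,\,I)$ and factoring out $\sqrt{\varepsilon_k}$. A direct block computation gives
\[
\tilde M_k := \tfrac{1}{\sqrt{\varepsilon_k}}\,S^{-1}M_k S = \begin{pmatrix} 0 & \mu^{-1}V_{\theta\theta}(\theta^k) \\ -2I & 0 \end{pmatrix} + \mathcal{O}(\sqrt{\varepsilon_k}),
\]
so that the eigenvalues of $M_k$ are exactly $\sqrt{\varepsilon_k}$ times those of $\tilde M_k$, and as $\varepsilon_k\to 0$ they converge to those of the limiting matrix $M_0$ obtained by dropping the $\mathcal{O}(\sqrt{\varepsilon_k})$ term and replacing $\theta^k$ by $\bar\theta$. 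Eliminating the $\delta r$-component through the invertible lower-left block shows that $\Lambda$ is an eigenvalue of $M_0$ precisely when $\Lambda^2=-2\nu$ for some eigenvalue $\nu$ of $\mu^{-1}V_{\theta\theta}(\bar\theta)$. Hence a positive $\nu$ gives a purely imaginary pair $\Lambda=\pm i\sqrt{2\nu}$, a negative $\nu$ a real pair $\Lambda=\pm\sqrt{2|\nu|}$, a nonreal $\nu$ a pair with nonzero real part, and the forced eigenvalue $\nu=0$ (with eigenvector $(1,\dots,1)$, from the rotational invariance of $V$) the double zero corresponding to the symmetry subspace $\mathrm{span}\{v_1,v_2\}$.

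Next I would transfer this leading-order picture to $M_k$. Since $\bar\theta$ is a nondegenerate critical point of $V$ and $\mu$ is invertible, $\mu^{-1}V_{\theta\theta}(\bar\theta)$ and $V_{\theta\theta}(\bar\theta)$ share a one-dimensional kernel, so $M_0$ has exactly a two-dimensional kernel and its remaining $2(N-1)$ eigenvalues are nonzero. For $\varepsilon_k$ small the corresponding eigenvalues of $M_k$ stay bounded away from zero (after rescaling they converge to the nonzero eigenvalues of $M_0$), while $v_1,v_2$ persist as an exact kernel by the rotational and scaling symmetries; this gives the claimed nondegeneracy of $(r^k,\theta^k)$. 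On the skew-orthogonal complement of $\mathrm{span}\{v_1,v_2\}$ the spectrum of $M_k$ is then, to leading order, the $2(N-1)$ roots $\pm\sqrt{-2\varepsilon_k\,\nu_j}$ attached to the $N-1$ nonzero eigenvalues $\nu_j$ of $\mu^{-1}V_{\theta\theta}(\bar\theta)$.

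The main obstacle — and the step where the Hamiltonian structure is indispensable — is to guarantee that the purely imaginary leading-order eigenvalues do not acquire a small real part at finite $\varepsilon_k$, and that no nontrivial Jordan block forms, since a generic $\mathcal{O}(\sqrt{\varepsilon_k})$ perturbation of a matrix with imaginary spectrum could push eigenvalues off the axis. I would close this using that $M_k$ is Hamiltonian, so its spectrum is invariant under $\lambda\mapsto-\lambda$ and (being real) $\lambda\mapsto\bar\lambda$; the real conjugation and positive scaling preserve this quadruple symmetry for $\tilde M_k$. If $\nu_j>0$ is simple then $M_0$ has a simple eigenvalue at $i\sqrt{2\nu_j}$, which perturbs to a unique $\Lambda_k$ nearby; the symmetry forces $-\bar\Lambda_k$ to lie near the same point, and simplicity then yields $\Lambda_k=-\bar\Lambda_k$, i.e. $\Lambda_k$ is purely imaginary with no Jordan block. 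This establishes linear stability when all $N-1$ nonzero eigenvalues are positive. For the converse, if some nonzero $\nu_j$ is negative the associated real eigenvalue of $M_0$ persists as a real eigenvalue of $M_k$ (a simple real eigenvalue of a real matrix stays real under real perturbation), and if some $\nu_j$ is nonreal the relation $\Lambda^2=-2\nu_j$ forces an eigenvalue with nonzero real part; in either case $M_k$ has an eigenvalue with strictly positive real part for small $\varepsilon_k$, contradicting linear stability. Thus stability holds if and only if all $N-1$ nonzero eigenvalues of $\mu^{-1}V_{\theta\theta}(\bar\theta)$ are real and positive, completing the equivalence.
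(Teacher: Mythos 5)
Your leading-order analysis reproduces the core computation of the paper by a slightly different (and arguably cleaner) route: the paper applies a block-determinant identity to $M_k-\lambda_k I$ and extracts roots $\lambda_k=\sqrt{\varepsilon_k}\,\gamma(\varepsilon_k)$ with $\det\left(\gamma_0^2+2\mu^{-1}V_{\theta\theta}(\bar{\theta})\right)=0$, whereas you obtain the same limiting equation $\Lambda^2=-2\nu$ by conjugating with $S=\mathrm{diag}(\sqrt{\varepsilon_k}I,I)$ and rescaling. Your instability direction is also essentially the paper's argument, and in fact it does not even need your simplicity caveat: if some $\nu_j$ is negative or nonreal, continuity of the spectrum alone forces an eigenvalue of $M_k$ with nonzero real part for small $\varepsilon_k$. (One minor correction along the way: $v_1,v_2$ do not persist as an \emph{exact kernel} of $M_k$; as the paper notes, the symmetry subspace carries a nontrivial Jordan block, with only the rotation direction in the kernel. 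This does not harm your argument, since linear stability is defined on the skew-orthogonal complement.)

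The genuine gap is in the stability direction, precisely at the phrase ``If $\nu_j>0$ is \emph{simple}.'' The theorem assumes only that $\bar{\theta}$ is a nondegenerate critical point, i.e.\ that $\mu^{-1}V_{\theta\theta}(\bar{\theta})$ has exactly one zero eigenvalue; the $N-1$ positive eigenvalues may coincide, and in the symmetric examples relevant to this paper (equal weights, and the $(N+1)$-gon for larger $N$, whose dihedral symmetry forces degeneracies) they do. For a purely imaginary eigenvalue of multiplicity $\geq 2$, your perturbation argument breaks down: the Hamiltonian spectral symmetry $\lambda\mapsto-\lambda$, $\lambda\mapsto\bar{\lambda}$ does \emph{not} pin the perturbed cluster to the imaginary axis, because two eigenvalues near $i\omega_0$ may split as $\pm\sigma+i\omega'$ while their conjugates $\pm\sigma-i\omega'$ sit near $-i\omega_0$ — a Krein collision (Hamiltonian--Hopf mechanism) fully consistent with the quadruple symmetry. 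Even when the cluster stays on the axis, a repeated eigenvalue can carry a nontrivial Jordan block, which also violates the definition of linear stability used here. This is exactly why the paper closes the converse by invoking Moeckel's lemma, which exploits the symplectic structure (definiteness of an associated quadratic form on an invariant subspace, i.e.\ a Krein-signature argument) and yields purely imaginary, semisimple spectrum regardless of multiplicity. To repair your proof you would need either to add a simplicity hypothesis (weakening the theorem) or to replace the last step with such a signature/definiteness argument.
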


\added{While the matrix $M_k$ and the stability criterion are different, it turns out that the structure of the proof of Theorem 2 in \cite{barry2012relative} can be used here.  Because of this, we give an outline of the argument and how it generalizes to this setting rather than including all details. }

Let $\lambda_k \in \mathbb{C}$ be nonzero and consider the matrix

\begin{equation}M_k-\lambda_k I= \left( \begin{array}{cc}-\lambda_k I+ \varepsilon_k A_k + \mathcal{O}(\varepsilon_k^2) & \varepsilon_k \mu^{-1} V_{\theta\theta}\left(\theta^k\right) + \mathcal{O}(\varepsilon_k^2) \\
-2I + \mathcal{O}(\varepsilon_k) & -\lambda_k I+ \varepsilon_k D_k+\mathcal{O}(\varepsilon_k^2) \end{array} \right)\label{eqn:mat} \end{equation}

 We can calculate the determinant of $M_k-\lambda_k I$ using the following observation (see e.g. Lemma 4 in \cite{barry2012relative}): If $A, B, C, D$ are $N\times N$ matrices making up the block matrix

\begin{equation} M=\left( \begin{array}{cc} A & B\\
C & D \end{array}\right)\end{equation}

and $A$ is invertible, then 
$\det M=\det(A)\det(D-C A^{-1} B)$.  This gives

\begin{align} \label{eq:det}\det(M_k-\lambda_k I)&= \det\left(-\lambda_k I +\mathcal{O}(\varepsilon_k) \right)\det(-\lambda_k I +\mathcal{O}(\varepsilon_k) \nonumber\\
& - (-2I+\mathcal{O}(\varepsilon_k))(-\lambda_k I+\mathcal{O}(\varepsilon_k))^{-1}(\varepsilon_k\mu^{-1} V_{\theta\theta}(\theta^k)+\mathcal{O}(\varepsilon_k^2))).
\end{align}

\added{Even though the matrix $M_k$ is different, the leading order terms in Equation \eqref{eq:det} match those in \cite{barry2012relative} up to the matrix $\mu^{-1}$. Because $\mu^{-1}$ is independent of $\varepsilon_k$, it is straightforward (but tedious) to check that their calculations carry over to this setting. The result is that there are $2N-2$ roots of $\det(M_k-\lambda_k I)=0$ of the form $\lambda_k=\sqrt{\varepsilon_k}\gamma(\varepsilon_k)$ where $\displaystyle \lim_{\varepsilon_k\rightarrow 0}\gamma(\varepsilon_k)=\pm\gamma_0$ and $\gamma_0$ satisfies}
\begin{equation*}
\det(\gamma_0^2+2\mu^{-1}V_{\theta\theta}(\bar{\theta}))=0.
\end{equation*}

\added{These $2N-2$ eigenvalues, together with the two zero eigenvalues associated with rotational and scaling symmetries, form the entire spectrum of $M_k$. }\deleted{$\bar{M}$, the matrix in the limit $\varepsilon_k\to 0$.}   We thus see that eigenvalues of $V_{\theta\theta}(\bar{\theta})$ are closely related to eigenvalues of $M_k$ for $\varepsilon_k$ small.

\added{To make this relationship more explicit, let $\zeta=-\gamma_0^2$ be a nonzero eigenvalue of $\mu^{-1}V_{\theta\theta}(\bar{\theta})$ and suppose that $\zeta$ is negative or has nonzero imaginary part. Then for $\varepsilon_k$ sufficiently small, $\gamma(\varepsilon_k)$ must have nonzero real part, and therefore $\lambda_k=\sqrt{\varepsilon_k}\gamma(\varepsilon_k)$ has nonzero real part. Thus the relative equilibrium is not linearly stable. Further note that eigenvalues of $2\mu^{-1}V_{\theta\theta}(\bar{\theta})$ have the same signs as the eigenvalues of $\mu^{-1}V_{\theta\theta}(\bar{\theta})$, and that  $\mu^{-1}V_{\theta\theta}(\bar{\theta})$ always has one zero eigenvalue.  This proves the forward direction of the theorem.}

\added{The converse relies on the relationship developed above and a useful Lemma due to Moeckel that exploits the symplectic structure to characterize linear stability \cite{moeckel1994linear}.  The calculations in \cite{barry2012relative} carry over once again, exactly because $\mu^{-1}$ does not affect $\varepsilon_k$-dependent estimates.} 

\added{In the case that $\mu_i>0$ for $i=1,...,N$, $\mu^{-1} V_{\theta\theta}(\bar{\theta})$ having $N-1$ nonzero positive eigenvalues is equivalent to the critical point $\bar{\theta}$ being a nondegenerate minimum of $V$. The following corollary is parallel to a result of Moeckel's for the $(1+N)$-body problem \cite{moeckel1994linear}, where the corresponding ``mass matrix" $\mu^{-1}$ is a positive-definite metric on configurations because masses are positive.  Moreover, it is a direct generalization of the stability result in \cite{barry2012relative}, where all parameters $\mu_i$ were set to one.}

\begin{corollary}\label{cor:min}
Let $\mu_i>0$ for all $i=1,...,N$, and let $(r^k,\theta^k)$, $(\bar{r},\bar{\theta})$ be as in Theorem \ref{stability theorem}.  For $\varepsilon_k$ sufficiently small, $(r^k,\theta^k)$ is nondegenerate and is linearly stable if and only if $\bar{\theta}$ is a nondegenerate minimum of $V$.
\end{corollary}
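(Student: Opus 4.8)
The plan is to obtain Corollary \ref{cor:min} essentially for free from Theorem \ref{stability theorem}, which already reduces linear stability (for $\varepsilon_k$ small, with $\bar{\theta}$ a nondegenerate critical point) to the single requirement that the weighted Hessian $\mu^{-1}V_{\theta\theta}(\bar{\theta})$ have exactly $N-1$ positive eigenvalues. All that remains is a purely linear-algebraic translation in the special case $\mu_i>0$: I want to show that the eigenvalue count for $\mu^{-1}V_{\theta\theta}(\bar{\theta})$ coincides with that for the symmetric Hessian $V_{\theta\theta}(\bar{\theta})$ itself, so that ``$N-1$ positive eigenvalues of the weighted Hessian'' becomes ``$\bar{\theta}$ is a nondegenerate minimum of $V$.''

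The key step is an inertia argument. Since $\mu=\mathrm{diag}\{\mu_1,\dots,\mu_N\}$ with every $\mu_i>0$, both $\mu$ and $\mu^{-1}$ are positive definite and possess positive definite square roots $\mu^{\pm 1/2}$. Conjugating by $\mu^{1/2}$ gives
\[
\mu^{1/2}\bigl(\mu^{-1}V_{\theta\theta}\bigr)\mu^{-1/2}=\mu^{-1/2}V_{\theta\theta}\,\mu^{-1/2}=:S,
\]
so the generically non-symmetric matrix $\mu^{-1}V_{\theta\theta}$ is \emph{similar} to the symmetric matrix $S$; in particular it is diagonalizable with real eigenvalues agreeing with those of $S$, counted with multiplicity. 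At the same time $S=\mu^{-1/2}V_{\theta\theta}\,\mu^{-1/2}$ is a \emph{congruence} of $V_{\theta\theta}$ by the invertible matrix $\mu^{-1/2}$, so Sylvester's law of inertia guarantees that $S$ and $V_{\theta\theta}$ have exactly the same numbers of positive, negative, and zero eigenvalues. Chaining the two observations, $\mu^{-1}V_{\theta\theta}(\bar{\theta})$ and $V_{\theta\theta}(\bar{\theta})$ share the same inertia.

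To finish, I would invoke the rotational symmetry of the problem: $V_{\theta\theta}(\bar{\theta})\,v_0=0$ for $v_0=(1,\dots,1)$, so both Hessians carry at least this one zero eigenvalue, and nondegeneracy of $\bar{\theta}$ (exactly one zero eigenvalue) is preserved by the inertia match. Hence $\mu^{-1}V_{\theta\theta}(\bar{\theta})$ has $N-1$ positive eigenvalues if and only if $V_{\theta\theta}(\bar{\theta})$ does; since there is no room left for negative eigenvalues, this forces the inertia to be $(N-1,0,1)$, which is precisely the statement that the critical point $\bar{\theta}$ is a nondegenerate minimum of $V$ (positive definite on the complement of $\mathrm{span}\{v_0\}$). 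Combined with the nondegeneracy-plus-stability equivalence supplied by Theorem \ref{stability theorem}, this yields the corollary.

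The one point deserving care — and the reason the hypothesis $\mu_i>0$ is essential — is the first half of the inertia argument: for indefinite $\mu$ the matrix $\mu^{-1}V_{\theta\theta}$ need not be similar to any symmetric matrix, its spectrum can leave the real axis, and the clean inertia correspondence breaks down. This is the linear-algebraic shadow of the indefinite ``circulation metric'' flagged in the introduction, and it is exactly why the minimization characterization is special to the positive-circulation regime rather than a feature of the general problem.
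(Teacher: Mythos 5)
Your proposal is correct and takes essentially the same route as the paper: the paper's entire proof of the corollary is the assertion that, for $\mu_i>0$, the matrix $\mu^{-1}V_{\theta\theta}(\bar{\theta})$ having $N-1$ positive eigenvalues is equivalent to $\bar{\theta}$ being a nondegenerate minimum of $V$, justified only by the remark that $\mu^{-1}$ is then a positive-definite metric on configurations. Your similarity argument $\mu^{1/2}\bigl(\mu^{-1}V_{\theta\theta}\bigr)\mu^{-1/2}=\mu^{-1/2}V_{\theta\theta}\mu^{-1/2}$ combined with Sylvester's law of inertia is the standard proof of exactly that assertion, so you have simply supplied the linear-algebraic details (including why positivity of the $\mu_i$ is essential) that the paper leaves implicit.
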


\added{Corollary \ref{cor:min} is generally not true when some vortices have negative circulations; counterexamples can be seen in Figures 3 and 5.  }The characterization of relative equilibria as nondegenerate minima of an appropriate potential-like function was initially conjectured by Moeckel for the Newtonian problem \cite{albouy2013someproblems}.  In addition to the references mentioned above, Roberts proved the more general result that linear stability in the $N$-vortex problem is equivalent to minimizing the Hamiltonian restricted to a level set of the angular impulse when all vortices have positive circulation \cite{roberts2013stability}.  He also gave counterexamples in the case that circulations have varying signs, which once again points to the complexity introduced by allowing vortices to spin in opposite directions.

\section{The $(1+3)$-Vortex Problem \label{sect4}}
\added{We now} turn to the case $N=3$. \added{The reduction described in Section \ref{sect2} makes analysis of existence, stability, and symmetry more manageable through a characterization of critical points of $V$. }\deleted{The gradient of $V$ is a system of equations in three position variables and three circulation parameters. While we could attempt to solve for the critical points numerically, } We are able to obtain robust analytical results by using techniques from computational algebraic geometry, namely Gr\"{o}bner bases and the Hermite method. We discover conditions under which critical points of $V$ are symmetric, and this yields \textit{asymmetric} relative equilibria of the $4$-vortex problem. Theorem \ref{stability theorem} is then used to characterize stability. Moreover, we give a rigorous count of the number of critical points of $V$ (up to rotational symmetry). \deleted{using the Hermite Method., and hence we know that numerical methods are finding all possible families of relative equilibria.}

The next \added{two subsections} give a brief technical overview of two important theories from algebraic geometry that we apply in the rest of the section. \deleted{Mathematica} The first theory introduces the Gr\"{o}bner basis, which transforms a given a set of polynomial equations into a second set of polynomials with the same solution set. Combining a Gr\"{o}bner basis with an elimination ideal projects the set of solutions onto a subspace of the variables. \added{In Section \ref{sect:sym} we make a change of coordinates that transforms the equations for critical points of $V$ into polynomial equations, and then use this method to project the set of solutions from the space of both position variables and circulation parameters on to circulation parameter space, thus reducing the dimension of the problem.} The second technique, the Hermite Method, is a root-counting algorithm for systems of polynomial equations with coefficients in $\mathbb{Q}$,\added{ and we use it to count critical points of $V$ in later subsections.} \added{A deep understanding of Sections 4.1 and 4.2 is not needed to follow the rest of the section; most of the difficulty involves setting up the problem in such a way that the algebraic geometry can be successfully implemented.}

%\deleted{\subsection{Algebraic Geometry Techniques \label{alg geo section}}}

\deleted{Below we outline the Gr\"{o}bner basis and Hermite method techniques, which are used to describe the variety (zero set) of a set of polynomials.}

\subsection{The Gr\"{o}bner Basis}
The Gr\"{o}bner Basis is an incredibly useful tool for solving polynomial equations. A Gr\"{o}bner Basis can also be used to eliminate variables from a set of equations, either naturally through its algorithmic existence or by finding the Gr\"{o}bner basis of the elimination ideal, which gives the projection of the solution onto a \added{subspace}. \deleted{SAID THIS IN PREVIOUS PAR ABOVE/REDUNDANT For our results on relative equilibria, after changing the equations for critical points of $V$ to polynomial equations, a Gr\"obner Basis with an elimination ideal is calculated to eliminate the position variables of the vortices so that we can analyze critical points of $V$ in only the circulation parameter space.}\added{We now introduce the Gr\"obner basis and relevant results.}\deleted{We now briefly outline some results from algebraic geometry related to the set of roots for a system of polynomial equations.} For more details and proofs of these well-known theorems, see \cite{Cox_Ideals}. 

Let $k$ be a field, let $F$ be a set of polynomials in $k[x_1,...,x_n]$, and let $I=(F)$ be the ideal generated by $F$. Geometrically, the set of zeros of $F$ is called the \textbf{variety} of $F$ and denoted $Var(F)$. 

\begin{thm}If $I=(F)$ is the ideal generated by $F$ in $k[x_1,...,x_n]$, then $Var(I)=Var(F)$. \end{thm}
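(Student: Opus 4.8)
The plan is to prove the set equality $Var(I) = Var(F)$ by establishing the two inclusions separately. The only ingredients needed are the definition of the variety of a set of polynomials as its common zero locus, the standard description of the generated ideal $I = (F)$ as the collection of all \emph{finite} combinations $\sum_i h_i f_i$ with $h_i \in k[x_1,\dots,x_n]$ and $f_i \in F$, and the elementary observation that evaluation at a fixed point $a \in k^n$ is a ring homomorphism $k[x_1,\dots,x_n] \to k$, so it respects sums and products.

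First I would prove $Var(I) \subseteq Var(F)$. Since every generator lies in the ideal it generates, we have $F \subseteq I$. Hence any point $a$ at which every polynomial of $I$ vanishes makes, in particular, every polynomial of $F$ vanish, so $a \in Var(F)$. This inclusion is immediate and requires no computation.

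The reverse inclusion $Var(F) \subseteq Var(I)$ is where the (mild) content lies. Let $a \in Var(F)$, so that $f(a) = 0$ for every $f \in F$, and let $g \in I$ be arbitrary. By the characterization of the generated ideal, $g = \sum_{i=1}^m h_i f_i$ for finitely many $h_i \in k[x_1,\dots,x_n]$ and $f_i \in F$. Evaluating at $a$ and using that evaluation preserves addition and multiplication gives $g(a) = \sum_{i=1}^m h_i(a) f_i(a) = \sum_{i=1}^m h_i(a)\cdot 0 = 0$. Since $g \in I$ was arbitrary, $a \in Var(I)$. Combining the two inclusions yields $Var(I) = Var(F)$.

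There is no genuine obstacle here; the statement is a foundational fact, and I would expect the write-up to be only a few lines. The one point that deserves care — and the only place a careless argument could slip — is that $F$ may be infinite, so one must invoke that each element of $I$ is a \emph{finite} combination of generators rather than attempting to manipulate an infinite sum. Once that is noted, the ring-homomorphism property of evaluation does all the work.
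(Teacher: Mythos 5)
Your proof is correct: the paper itself gives no proof of this statement (it is cited as a well-known fact to Cox, Little, and O'Shea), and your two-inclusion argument, including the careful remark that elements of $I=(F)$ are \emph{finite} combinations $\sum_i h_i f_i$, is exactly the standard argument found in that reference. Nothing further is needed.
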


\noindent 

For a given ideal, the generating set may not be unique. In order to simplify the problem, we will look at a specific generating set for $I=(F)$.

A monomial order, as defined next, is an example of lexicographic order on monomials. It can refer to either the ordering on monomials or to the ordering of the vectors of exponents of the monomials. 

\begin{definition} A \textbf{monomial order} is a total ordering on monomials $x_{1}^{\alpha_{1}}\cdots x_{n}^{\alpha_{n}}$, $\alpha_i \in \mathbb{Z}_{\geq0}$ in a polynomial ring such that if $\alpha =(\alpha_1,...,\alpha_n)> \beta=(\beta_1,...\beta_n)$, then $\alpha+\gamma>\beta+\gamma$ for any $\gamma \in \mathbb{Z}_{\geq0}^n$, and the ordering is a well-ordering. Moreover, an \textbf{elimination ordering} is a monomial ordering where $x_1^{\alpha_1}\cdots x_k^{\alpha_k}> x_{k+1}^{\beta_{k+1}}\cdots x_n^{\beta_n}$ whenever one of $\alpha_i>0$ for $i=1,...,k$. \end{definition}

\begin{definition} A \textbf{Gr\"{o}bner basis} for an ideal $I$ is a set of polynomials that generate the ideal $I$ and for all $ f \in I$ there is some $g$ in the basis such that the leading term of $g$ divides the leading term of $f$. \end{definition}

\begin{thm} Every ideal in $k[x_1,...,x_n]$ has a Gr\"{o}bner basis with respect to a given monomial ordering. \end{thm}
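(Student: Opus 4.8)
The plan is to reduce the statement about an arbitrary ideal to the far more tractable case of a \emph{monomial} ideal. Fix the given monomial order, and for a nonzero polynomial $f \in k[x_1,\dots,x_n]$ let $\mathrm{LT}(f)$ denote its leading term with respect to this order. Given an ideal $I$, I would form the associated \emph{ideal of leading terms}
\[
\mathrm{LT}(I) = \bigl(\,\mathrm{LT}(f) : f \in I,\ f \neq 0\,\bigr),
\]
which is by construction generated by monomials. The entire argument hinges on transferring finite-generation information from this monomial ideal back to $I$ itself.

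The key step is to show that every monomial ideal is finitely generated; this is Dickson's Lemma, and I regard it as the main obstacle. It is a purely combinatorial statement about the poset of exponent vectors in $\mathbb{Z}_{\geq 0}^n$ ordered by divisibility, proved by induction on the number of variables $n$ (or, alternatively, deduced from the Noetherian property via the Hilbert Basis Theorem, itself proved along the same lines). Granting it, Dickson's Lemma lets me select finitely many of the generating monomials, namely leading terms $\mathrm{LT}(g_1),\dots,\mathrm{LT}(g_t)$ of genuine elements $g_1,\dots,g_t \in I$, with
\[
\mathrm{LT}(I) = \bigl(\,\mathrm{LT}(g_1),\dots,\mathrm{LT}(g_t)\,\bigr).
\]

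Finally I would verify that $G = \{g_1,\dots,g_t\}$ meets both requirements in the definition of a Gr\"obner basis. The leading-term divisibility condition holds essentially by construction: for any nonzero $f \in I$ we have $\mathrm{LT}(f) \in \mathrm{LT}(I)$, so $\mathrm{LT}(f)$ is a monomial multiple of some $\mathrm{LT}(g_i)$ and is therefore divisible by it. To see that $G$ also \emph{generates} $I$, I would invoke the multivariate division algorithm: dividing an arbitrary $f \in I$ by the ordered list $g_1,\dots,g_t$ produces an expression $f = \sum q_i g_i + r$ in which no term of the remainder $r$ is divisible by any $\mathrm{LT}(g_i)$. Here the well-ordering clause in the definition of a monomial order is precisely what guarantees termination, since each reduction strictly lowers the leading monomial. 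Now $r = f - \sum q_i g_i \in I$, so if $r \neq 0$ then $\mathrm{LT}(r) \in \mathrm{LT}(I)$ would be divisible by some $\mathrm{LT}(g_i)$, contradicting the defining property of the remainder. Hence $r = 0$, giving $f \in (g_1,\dots,g_t)$ and completing the proof. The only genuinely nontrivial input is Dickson's Lemma; everything downstream is a bookkeeping argument organized around the monomial order.
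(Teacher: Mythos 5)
Your proposal is correct and follows exactly the standard route: the paper itself gives no proof, deferring to the cited reference (Cox--Little--O'Shea), and your argument --- reducing to the monomial ideal $\mathrm{LT}(I)$, invoking Dickson's Lemma to extract finitely many leading terms $\mathrm{LT}(g_1),\dots,\mathrm{LT}(g_t)$ of elements of $I$, and then using the division algorithm to show $G=\{g_1,\dots,g_t\}$ both generates $I$ and satisfies the divisibility condition --- is precisely the proof given there. No gaps; the only input you take on faith, Dickson's Lemma, is correctly identified and is exactly the black box the standard treatment rests on.
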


Buchberger's Algorithm allows us to find a Gr\"{o}bner basis, often giving a polynomial in the basis with only one variable (or at least fewer variables than the original polynomial). This ``elimination" is based on the choice of monomial order. A Gr\"{o}bner basis can also be used to guarantee elimination of \added{specified} \deleted{certain} variables from a system of polynomial equations, as evidenced by the following theorem. 

\begin{thm}Let $G$ be a Gr\"{o}bner basis for $I \subset k[x_1,..,x_k,x_{k+1},...,x_n]$ with the elimination ordering $x_1^{\alpha_1}\cdots x_k^{\alpha_k} > x_{k+1}^{\beta_{k+1}}\cdots x_n^{\beta_n}$. Then $G\cap I_k$ is a 
Gr\"{o}bner basis for the elimination ideal $I_k = I \cap k[x_{k+1},...,x_n]$.\end{thm}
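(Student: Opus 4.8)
The plan is to verify directly the two defining requirements of a Gr\"obner basis given earlier: that $G_k := G \cap k[x_{k+1},\dots,x_n]$ generates the elimination ideal $I_k$, and that for every nonzero $f \in I_k$ the leading term of some element of $G_k$ divides the leading term of $f$. First I would observe that $G_k = G \cap I_k$: every $g \in G$ lies in $I$, so a $g$ that also lies in $k[x_{k+1},\dots,x_n]$ lies in $I \cap k[x_{k+1},\dots,x_n] = I_k$, and conversely. Thus it suffices to work with $G_k$ and to show it has the two required properties, relative to the ordering that the given elimination ordering induces on $k[x_{k+1},\dots,x_n]$.

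The heart of the argument, and the step where the elimination ordering hypothesis is essential, is the leading-term property. I would take a nonzero $f \in I_k$. Since $f \in I$ and $G$ is a Gr\"obner basis for $I$, there is some $g \in G$ whose leading term $LT(g)$ divides $LT(f)$. Because $f$ involves only the variables $x_{k+1},\dots,x_n$, its leading monomial does too, and hence so does $LM(g)$, which divides $LM(f)$. The crucial point is then to upgrade this statement about the leading monomial of $g$ to a statement about all of $g$: by the defining property of an elimination ordering, any monomial in which some $x_i$ with $i \le k$ appears is strictly larger than any monomial in the variables $x_{k+1},\dots,x_n$ alone. Therefore, if $g$ contained any term involving $x_1,\dots,x_k$, that term would exceed $LM(g)$, contradicting that $LM(g)$ is the leading monomial of $g$. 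Hence $g \in k[x_{k+1},\dots,x_n]$, i.e. $g \in G_k$, which establishes the leading-term divisibility property for $I_k$.

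Finally I would deduce that $G_k$ generates $I_k$ from the leading-term property via the division algorithm carried out inside $k[x_{k+1},\dots,x_n]$. Given $f \in I_k$, dividing by $G_k$ yields $f = \sum_i a_i g_i + r$ with $a_i, g_i$ in the smaller ring and no term of the remainder $r$ divisible by any $LT(g_i)$; since $f$ and all $g_i$ lie in $I_k$, so does $r$. Were $r$ nonzero, the leading-term property just proved, applied to $r \in I_k$, would force $LT(r)$ to be divisible by some $LT(g_i)$, a contradiction; hence $r = 0$ and $f \in (G_k)$. I expect the main obstacle to be expository rather than mathematical: making precise the passage from ``the leading monomial lies in the smaller ring'' to ``the whole polynomial lies in the smaller ring,'' since this is exactly what the elimination-ordering hypothesis buys and is the only place the hypothesis is used. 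Everything else reduces to the standard facts that a Gr\"obner basis for $I$ detects the leading terms of all elements of $I$, and that the leading-term property implies generation through the division algorithm.
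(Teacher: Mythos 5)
Your proof is correct and complete. The paper itself gives no proof of this theorem---it is quoted as a standard result with a citation to Cox, Little, and O'Shea---and your argument (using the elimination ordering to upgrade ``$LM(g)$ lies in $k[x_{k+1},\dots,x_n]$'' to ``$g$ lies in $k[x_{k+1},\dots,x_n]$,'' then obtaining generation by running the division algorithm inside the smaller ring) is exactly the classical proof of the Elimination Theorem found in that reference.
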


\subsection{Hermite Method}

In addition to the Gr\"{o}bner basis, we use the Hermite method for counting roots of a set of polynomials $P$.  \added{The method utilizes the equivalent calculation of finding the signature of a particular quadratic form. This quadratic form is constructed using traces of linear maps given by multiplication of elements in the basis of the quotient ring $\mathbb{Q}[x]/I$, where $I$ is the ideal generated by $P$. } Below we give a short outline of the construction of this matrix. Another description of the method for one polynomial can be found in \cite{Coste_SemiAlgebraicGeometry}.  A more detailed description of the method for a set of polynomials and its proof is given in Chapter 2 of \cite{Cox_UsingAlg} or Chapter 4 of \cite{basu_algorithms}. \added{The important result that will be used when counting relative equilibria in later sections is stated in Theorem \ref{hermite theorem}.} 

Let $I$ be the ideal generated by a set of polynomials in $\mathbb{Q}[x_1,...,x_n]$. The quotient ring $\mathbb{Q}[x_1,...,x_n]/ I$ is a vector space over $\mathbb{Q}$ and it is finite dimensional (if $Var(I)$ is finite). The Hermite method of counting real roots involves identifying a particular matrix with entries constructed from basis elements of the vector space $\mathbb{Q}[x_1,...,x_n]/ I$.

To construct this matrix, one first finds the Gr\"{o}bner basis $G$ of the ideal $I$ with respect to a chosen 
lexicographic order. The basis $B$ for $\mathbb{Q}[x_1,...,x_n]/ I$ is a set of monomials that are not in the ideal of leading terms $\langle LT(I)\rangle$ of $G$ (with respect to that same lexicographic order), \[B=\{x^{\alpha} : x^{\alpha}\notin\langle LT(I)\rangle \}.\]

This basis consists of all monomials with exponents less than the exponents of the leading terms of the Gr\"{o}bner basis. For a basis element $f_i \in B$, there is a corresponding linear map $m_i$ for multiplication by $f_i$ over the vector space $\mathbb{Q}[x_1,...,x_n]/ I$.
Define the matrix $\mathcal{H}(I)$ by the entries $\mathcal{H}_{ij}=Tr(m_i \cdot m_j)$.  

The signature of a quadratic form is the difference between the dimensions of the positive definite and negative definite subspaces. This can be calculated for the matrix representation of the form, either by calculating the number of positive and negative eigenvalues of the matrix, or through equivalent calculations.

\begin{thm} \label{hermite theorem}The signature of the matrix $\mathcal{H}(I)$ is the number of distinct real roots of the polynomials generating $I$. The rank of $\mathcal{H}(I)$ is the number of distinct roots over $\mathbb{C}$. \end{thm}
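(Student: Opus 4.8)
The plan is to recognize $\mathcal{H}(I)$ as the Gram matrix of the \emph{trace form} on the finite-dimensional $\mathbb{Q}$-algebra $A = \mathbb{Q}[x_1,\ldots,x_n]/I$ and then to diagonalize this form over $\mathbb{C}$ using the points of $Var(I)$. Because multiplication operators satisfy $m_i \cdot m_j = m_{f_i f_j}$, the entry $\mathcal{H}_{ij} = Tr(m_i m_j)$ is precisely $\langle f_i, f_j \rangle$ for the symmetric bilinear form $\langle a,b \rangle := Tr(m_{ab})$ on $A$; traces of products of rational matrices are rational, so $\mathcal{H}(I)$ is a real symmetric matrix and its rank and signature are well defined. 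Since $Var(I)$ is finite, the complexification $A \otimes_{\mathbb{Q}} \mathbb{C} = \mathbb{C}[x]/I_{\mathbb{C}}$ is Artinian and decomposes, by the structure theorem for Artinian rings, as a product $\bigoplus_{k=1}^r A_k$ of local rings, one for each distinct complex root $p_k$, with $\dim_{\mathbb{C}} A_k$ equal to the multiplicity of $p_k$.

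The first key computation is that $Tr(m_a) = \sum_k (\dim A_k)\, a(p_k)$ for every $a \in A_{\mathbb{C}}$. First I would write an element of the local factor $A_k$ as its residue value $a(p_k) \in \mathbb{C}$ plus an element of the maximal ideal; since the maximal ideal of a local Artinian ring is nilpotent, multiplication by it is a nilpotent operator of trace zero, while multiplication by the scalar $a(p_k)$ on the $(\dim A_k)$-dimensional factor contributes $(\dim A_k)\, a(p_k)$. It follows that $\langle a,b \rangle = \sum_k (\dim A_k)\, a(p_k)\, b(p_k)$, a sum of products of evaluations weighted by the positive integers $\dim A_k$.

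The rank statement is then immediate. The form factors through the evaluation map $\mathrm{ev} : A_{\mathbb{C}} \to \mathbb{C}^r$, $a \mapsto (a(p_1),\ldots,a(p_r))$, which is surjective by polynomial interpolation at distinct points, and the induced form on $\mathbb{C}^r$ is diagonal with nonzero entries $\dim A_k$, hence nondegenerate of rank $r$. Thus the radical of $\langle\cdot,\cdot\rangle$ equals $\ker(\mathrm{ev})$ and the rank of the form is $r$. Since the rank of a matrix with rational entries is unchanged under field extension, $\mathrm{rank}\,\mathcal{H}(I) = r$ is the number of distinct complex roots.

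The signature is the subtler half, and I expect it to be the main obstacle, since $\mathcal{H}(I)$ is a real matrix while the diagonalization above lives over $\mathbb{C}$. Here I would work over $\mathbb{R}$ and use that complex conjugation permutes the roots, because $I$ is generated by polynomials with real coefficients: real roots are fixed, while non-real roots occur in conjugate pairs $\{p,\bar p\}$, and for a real element $a$ one has $a(\bar p) = \overline{a(p)}$. A single real root $p$ contributes $(\dim A_p)\, a(p)^2$ with $a(p)$ real, a positive-definite rank-one piece adding $+1$ to the signature. A conjugate pair contributes, writing $z = a(p) = u + iv$, the term $(\dim A_p)(z^2 + \bar z^2) = 2(\dim A_p)(u^2 - v^2)$, a hyperbolic plane of signature $0$ and rank $2$. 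Summing over all roots, the contributions of conjugate pairs cancel in the signature and add $2$ each to the rank, so $\mathrm{signature}\,\mathcal{H}(I)$ equals the number of real roots while the rank again recovers $(\#\,\text{real}) + 2(\#\,\text{pairs}) = r$. The crux is precisely this bookkeeping: tracking the real structure induced by conjugation and verifying that each conjugate pair produces exactly a hyperbolic plane.
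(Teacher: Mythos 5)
Your proposal is correct, and it is essentially the argument the paper relies on: the paper gives no proof of this theorem, deferring to Chapter 2 of Cox--Little--O'Shea (\emph{Using Algebraic Geometry}) and Chapter 4 of Basu--Pollack--Roy, where the standard proof runs exactly as yours does --- identify $\mathcal{H}(I)$ as the Gram matrix of the trace form $\langle a,b\rangle = Tr(m_{ab})$, establish $Tr(m_a)=\sum_k (\dim A_k)\, a(p_k)$ via the local Artinian decomposition of $\mathbb{C}[x_1,\ldots,x_n]/I_{\mathbb{C}}$ (Stickelberger's theorem), and then split the real form into positive-definite lines at real roots and hyperbolic planes at conjugate pairs. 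The only step you leave implicit is the surjectivity of the real evaluation map onto the conjugation-fixed subspace of $\mathbb{C}^r$ (needed so that the per-root blocks are genuinely independent over $\mathbb{R}$ and their signatures add), which follows in one line by averaging a complex preimage with its conjugate.
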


\added{We will illustrate the implementation of the Hermite method in Section \ref{sect:eq}.}
\deleted{In our examples, the Hermite method was implemented in Mathematica. The code can be found in Appendix \ref{Appendix}.}

\subsection{Symmetry \label{sect:sym}}

In the $(1+3)$-vortex problem, symmetry occurs when a line can be drawn through the strong vortex and one infinitesimal vortex such that the other infinitesimal vortices are symmetrically located with respect to this line.  In the following theorem, we show that symmetric configurations of the $(1+3)$-vortex problem must have two infinitesimal vortices with equal weight. \added{Equivalently, any relative equilibrium of the $(1+3)$-vortex problem with no equal weights will be asymmetric.}

\begin{thm} If a relative equilibrium of the $(1+3)$-vortex problem is symmetric, then two of the infinitesimal vortices have equal weight $\mu_i=\mu_j$, $i\neq j$. \label{symmetry theorem}
\end{thm}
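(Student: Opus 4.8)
The plan is to convert the geometric symmetry into a concrete angular ansatz and then read the weight constraint directly off the single critical-point equation belonging to the on-axis vortex. Since $V$ is rotationally invariant, I may rotate so that the axis of symmetry is the $x$-axis, and I may relabel the weak vortices so that the one lying on the axis is vortex $1$, i.e. $\bar\theta_1 = 0$. A line through the origin meets the unit circle in only two (antipodal) points, so at most one weak vortex besides vortex $1$ can lie on the axis; a quick case check shows that putting a second weak vortex on the axis would force two of the three distinct points to coincide, which the no-collision hypothesis forbids. Hence the only admissible symmetric configuration has vortices $2$ and $3$ off the axis and interchanged by the reflection $\phi \mapsto -\phi$, so that $\bar\theta_2 = \alpha$ and $\bar\theta_3 = -\alpha$ for some $\alpha \in (0,\pi)$.

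Next I introduce the odd function $g(\phi) = \sin\phi\,\bigl(1 - (2-2\cos\phi)^{-1}\bigr)$, so that the critical-point equations derived in the proof of Theorem \ref{thm:V} take the compact form $\sum_{j\neq i}\mu_j\, g(\bar\theta_i - \bar\theta_j) = 0$ for each $i$. Evaluating the equation for the on-axis vortex ($i=1$) and using $g(-\alpha) = -g(\alpha)$ collapses it to $(\mu_3 - \mu_2)\,g(\alpha) = 0$. This identity is the heart of the argument: either $\mu_2 = \mu_3$, which is precisely the claimed equality of two weights, or else $g(\alpha) = 0$.

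It then remains to eliminate the degenerate branch $g(\alpha) = 0$, and this is the one step that requires genuine care rather than a collision argument. On $(0,\pi)$ we have $\sin\alpha \neq 0$, so $g(\alpha) = 0$ forces $2 - 2\cos\alpha = 1$, i.e. $\alpha = \pi/3$; the three angles $0,\pi/3,-\pi/3$ are distinct, so this configuration is \emph{not} ruled out by the no-collision hypothesis and must be excluded by hand. To do so I substitute $\alpha = \pi/3$ into the equation for $i=2$: since $g(\bar\theta_2 - \bar\theta_1) = g(\pi/3) = 0$ while $g(\bar\theta_2 - \bar\theta_3) = g(2\pi/3) = 1/\sqrt{3} \neq 0$, that equation reduces to $\mu_3\, g(2\pi/3) = 0$, forcing $\mu_3 = 0$. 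Because the weak vortices are assumed to carry nonzero circulation weights (a zero weight would place the configuration in the restricted problem excluded in Section \ref{sect2}), this is a contradiction, so the branch $g(\alpha)=0$ cannot occur. Therefore $\mu_2 = \mu_3$, establishing the theorem.

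The main obstacle, then, is not the bookkeeping but the removal of the spurious solution $\alpha = \pi/3$: unlike the cases discarded by collisions, it is an honest, non-colliding configuration solving the on-axis equation, and ruling it out requires invoking a second equation together with the nonvanishing of the weights. A secondary point to get right is the exhaustiveness of the symmetry reduction, namely that up to rotation and relabeling the symmetric ansatz is unique; I expect this to be routine once the ``at most two points of the unit circle lie on a line through the origin'' observation is in place.
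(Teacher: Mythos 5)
Your proof is correct, and it reaches the theorem by a genuinely different route than the paper. The paper's proof fixes $\theta_1=0$, splits the symmetry hypothesis into three algebraic cases ($\theta_3=2\pi-\theta_2$, $\theta_3=2\theta_2$, $\theta_2=2\theta_3$, i.e.\ according to which vortex lies on the axis), converts each case into polynomial equations via the tangent half-angle substitution, and then computes a Gr\"obner basis of an elimination ideal to project the solution variety onto $(\mu_1,\mu_2,\mu_3)$-space; the resulting generator (e.g.\ $\mu_1\mu_2\mu_3(\mu_2-\mu_3)$ in Case 1) forces two weights to be equal once the ``trivial'' zeroes $\mu_i=0$ are discarded. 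You instead collapse the three cases into one by rotating and relabeling (legitimate, since the conclusion is permutation-invariant), then exploit the oddness of $g(\phi)=\sin\phi\,\bigl(1-(2-2\cos\phi)^{-1}\bigr)$ to reduce the on-axis equation to $(\mu_3-\mu_2)\,g(\alpha)=0$, and finish by excluding the branch $g(\alpha)=0$, i.e.\ $\alpha=\pi/3$, via the second critical-point equation, which forces $\mu_3=0$ --- the same nondegeneracy convention the paper invokes when it ignores trivial zeroes. Your observation that $(0,\pi/3,-\pi/3)$ is an honest, collision-free configuration that must be killed by a second equation rather than by geometry is precisely the content hidden inside the paper's elimination computation: it is why the factors $\mu_i$ appear in the Gr\"obner generator at all (that configuration does solve both equations when $\mu_3=0$). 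What your argument buys is transparency: it is elementary, computer-free, and shows structurally that the theorem follows from antisymmetry of the pairwise interaction across the symmetry axis. What the paper's argument buys is reusability: the polynomialization and Gr\"obner/elimination setup is exactly the machinery redeployed for the Hermite root counts in Sections 4.4 and 4.5, and it scales to settings where no parity trick collapses the system.
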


\begin{proof} Consider the equations for critical points of $V$. We first set $\theta_1=0$ to eliminate rotational symmetry. The partial derivatives of $V$ are linearly dependent since $V_{\theta_1}=-V_{\theta_2}-V_{\theta_3}$, and so we need only find the roots of $V_{\theta_2}$ and $V_{\theta_3}$. The two equations of interest are

\begin{align}
V_{\theta_2}&=-\mu_1 \mu_2 \left(-\sin(\theta_2) + \frac{\sin(\theta_2)}{2 - 2 \cos(\theta_2)} \right)- 
 \mu_2 \mu_3 \left(-\sin(\theta_2 - \theta_3) + \frac{\sin(\theta_2 - \theta_3)}{2 - 2 \cos(\theta_2 - \theta_3)}\right) \label{partial1}\\
 V_{\theta_3}&= -\mu_2 \mu_3 \left(\sin(\theta_2 - \theta_3) - \frac{\sin(\theta_2 - \theta_3)}{2 - 2 \cos(\theta_2 - \theta_3)}\right) - 
 \mu_1 \mu_3 \left(-\sin(\theta_3) + \frac{\sin(\theta_3)}{2 - 2 \cos(\theta_3)}\right). \label{partial2}
 \end{align}

There are three possible symmetric configurations, and we treat each case separately.\\

\noindent{\it Case 1:} $\theta_3=2\pi-\theta_2$. We make this substitution into equations \eqref{partial1} and \eqref{partial2} and then expand using identities. Setting aside the denominators for now, we see that the numerators are  
\begin{align}
V_{\theta_2}^{\textrm{num}}&=\mu_2 (\mu_1 - 2 \mu_1 \cos(\theta_2) + 2 \mu_3 \cos(\theta_2) - \mu_1 \cos^2(\theta_2) - 
   2 \mu_3 \cos^2(\theta_2) + 2 \mu_1 \cos^3(\theta_2) \nonumber \\& - 4 \mu_3 \cos^3(\theta_2)  +4 \mu_3 \cos^4(\theta_2) +
    \mu_1 \sin^2 (\theta_2)- 2 \mu_1 \cos(\theta_2) \sin^2(\theta_2) + 4 \mu_3 \cos(\theta_2) \sin^2(\theta_2)  \\
   &- 
   4 \mu_3 \cos^2(\theta_2) \sin^2(\theta_2))\nonumber
   \end{align}
and
\begin{align}
V_{\theta_3}^{\textrm{num}}&=-\mu_3 (\mu_1 - 2 \mu_1 \cos(\theta_2) + 2 \mu_2 \cos(\theta_2) - \mu_1 \cos^2(\theta_2) - 
   2 \mu_2 \cos^2(\theta_2) + 2 \mu_1 \cos^3(\theta_2)\nonumber\\ & - 4 \mu_2 \cos^3(\theta_2) + 4 \mu_2 \cos^4(\theta_2) +
    \mu_1 \sin^2(\theta_2) - 2 \mu_1 \cos(\theta_2) \sin^2(\theta_2) + 4 \mu_2 \cos(\theta_2) \sin^2(\theta_2) \\& - 
   4 \mu_2 \cos^2(\theta_2) \sin^2(\theta_2))\nonumber
\end{align}

The denominators of these two equations are the same: $2 (-1 + \cos(\theta_2)) (-1 + \cos(2 \theta_2))$. \added{The values $\theta_2=0,\, \pi$ correspond to collisions which we do not consider here. We therefore ignore the denominators and focus only on the numerators $V_{\theta_2}^{\textrm{num}}$ and $V_{\theta_3}^{\textrm{num}}$.}

Next, we apply a change of coordinates that transforms the system into rational polynomial equations. The transformation is motivated by the tangent half-angle identities for sine and cosine:

 \begin{equation} \cos(\theta_2)= \frac{r^2-1}{1+r^2} \qquad \sin(\theta_2)=\frac{2r}{1+r^2} \label{coord change}. \end{equation}

Conveniently, terms involving cosine and sine are now written in terms of a single variable, and the collisions terms where $\theta_2=0,\,\pi$ have been moved to infinity. Thus the denominators of the new equations can be ignored, resulting in two polynomials:
\begin{align}
V_{\theta_2}^*(r)&=-4 \mu_2 (-\mu_3 - 6 \mu_1 r^2 + 15 \mu_3 r^2 - 4 \mu_1 r^4 - 15 \mu_3 r^4 + 2 \mu_1 r^6 + 
   \mu_3 r^6)\\
V_{\theta_3}^*(r)&=4 \mu_3 (-\mu_2 - 6 \mu_1 r^2 + 15 \mu_2 r^2 - 4 \mu_1 r^4 - 15 \mu_2 r^4 + 2 \mu_1 r^6 + 
   \mu_2 r^6).
   \end{align}

Since we are looking for circulation weights $\mu_i$ that allow for symmetric equilibria, we \added{will} project the variety of the ideal $\langle V_{\theta_3}^*,V_{\theta_3}^*\rangle$ onto $(\mu_1,\mu_2,\mu_3)$-space. \added{By choosing an appropriate ordering, the Gr\"{o}bner Basis theory can be applied to calculate the elimination ideal that eliminates the configuration variable $r$}. \added{Given the two polynomials  $V_{\theta_3}^*$ and $V_{\theta_3}^*$, the theory is implemented via Buchberger's Algorithm.  This can be done in Mathematica with the $\texttt{GroebnerBasis}$ command and} elimination ordering $r>\mu_1>\mu_2>\mu_3$, \added{which gives the basis} $\mu_1\mu_2\mu_3(\mu_2-\mu_3)$. Hence (nontrivial) zeroes occur only when $\mu_2=\mu_3$. \\

\noindent{\it Case 2:} $\theta_3=2\theta_2$. The same process results in the Gr\"{o}bner basis  $\mu_1\mu_2\mu_3(\mu_1-\mu_3)$ for the elimination ideal. Thus symmetric configurations occur when $\mu_1=\mu_3$.\\

\noindent{\it Case 3:} $\theta_2=2\theta_3$.  In this case, the Gr\"{o}bner basis for the elimination ideal is $\mu_1\mu_2\mu_3(\mu_1-\mu_2)$, and symmetric configurations occur when $\mu_1=\mu_2$. 
\end{proof}

\subsection{Equal Weights\label{sect:eq}}  
Let $\mu_1=\mu_2=\mu_3$. The critical points of $V$ depend only on the ratio $\mu_1: \mu_2 : \mu_3$. To apply the Hermite method we must use integer values of $\mu_i$, hence we assume $\mu_1=\mu_2=\mu_3=1$. We will now give an illustration of the implementation of the Hermite Method, which will also be applied in Section \ref{sect:example 2}. As in the proof of Theorem \ref{symmetry theorem}, we set $\theta_1=0$ to reduce by rotational symmetry and replace $\cos(\theta_2)$, $\sin(\theta_2)$, $\cos(\theta_3)$, and $\sin(\theta_3)$ with $r_2$ and $r_3$ according to the tangent half-angle identities (see equation \eqref{coord change}).  Moreover, we again find that the denominators of the resulting rational functions are only zero at collisions, and so we ignore them.  The numerators are
\begin{align} V_{\theta_2}^{num}(r_2,r_3)=&-8 (r_2 - r_3) (-\mu_3 - 3 \mu_1 r_2^2 + 3 \mu_3 r_2^2 + \mu_1 r_2^4 + 3 \mu_1 r_2 r_3 - 
   9 \mu_3 r_2 r_3 - \mu_1 r_2^3 r_3 + 3 \mu_3 r_2^3 r_3 \nonumber \\ &+ 3 \mu_3 r_3^2 - 
   3 \mu_1 r_2^2 r_3^2 - 9 \mu_3 r_2^2 r_3^2 + \mu_1 r_2^4 r_3^2 + 3 \mu_1 r_2 r_3^3 + 
   3 \mu_3 r_2 r_3^3 - \mu_1 r_2^3 r_3^3 - \mu_3 r_2^3 r_3^3)\\
V_{\theta_3}^{num}(r_2,r_3) =& -8 (r_2 - r_3) (\mu_2 - 
   3 \mu_2 r_2^2 - 3 \mu_1 r_2 r_3 + 9 \mu_2 r_2 r_3 - 3 \mu_1 r_2^3 r_3 - 
   3 \mu_2 r_2^3 r_3 + 3 \mu_1 r_3^2 - 3 \mu_2 r_3^2 \nonumber \\&+ 3 \mu_1 r_2^2 r_3^2 + 
   9 \mu_2 r_2^2 r_3^2 + \mu_1 r_2 r_3^3 - 3 \mu_2 r_2 r_3^3 + \mu_1 r_2^3 r_3^3 + 
   \mu_2 r_2^3 r_3^3 - \mu_1 r_3^4 - \mu_1 r_2^2 r_3^4).
\end{align}
Since the common factor $(r_2-r_3)$ also corresponds to collisions, we remove it from both polynomials. With $\mu_i=1$, these become
\begin{align}
p(r_2,r_3)&=1 - r_2^4 + 6 r_2 r_3 - 2 r_2^3 r_3 - 3 r_3^2 + 12 r_2^2 r_3^2 - r_2^4 r_3^2 - 
 6 r_2 r_3^3 + 2 r_2^3 r_3^3 \\
q(r_2,r_3)&= -1 + 3 r_2^2 - 6 r_2 r_3 + 6 r_2^3 r_3 - 
 12 r_2^2 r_3^2 + 2 r_2 r_3^3 - 2 r_2^3 r_3^3 + r_3^4 + r_2^2 r_3^4.
 \end{align}
We then calculate the Gr\"{o}bner basis for the ideal $I$ generated by $p$ and $q$ using the $\texttt{GroebnerBasis}$ command in Mathematica and the monomial ordering $\texttt{DegreeReverseLexicographic}$. The Gr\"obner basis for $I$ contains 7 polynomials with leading terms 
 \begin{equation} LT(I)=\{ 
 r_2^3r_3^3,\,\,r_2^4r_3^2, \,\,r_2^5r_3^1, \,\,r_2^6, \,\,r_3^7, \,\,r_2r_3^6, \,\,r_2^2r_3^5\} .\end{equation} 
The quotient ring $Q[r_2,r_3]/ I$ has a basis of 24 monomials that are not in $\langle LT(I)\rangle$, the ideal generated by the leading terms. This basis consists of all monomials with exponents less than those of the leading terms with respect to Degree Reverse Lexicographic ordering. The basis monomials $\{b_1,...,b_{24}\}$ are ordered with respect to the same monomial ordering. 
We are now in a position to construct the matrix $\mathcal{H}$ whose signature gives the number of distinct real roots of $V_{\theta_2}$ and $V_{\theta_3}$ (see Theorem 7).  To do so, we must first create the linear map of multiplication of any polynomial $f \in Q[r_2,r_3]/ I$ by two basis monomials $b_ib_j$. The trace of this map becomes the $ij$-th entry in $\mathcal{H}$.  This calculation was done using the $\texttt{HermiteForm}$ command as defined in Appendix \ref{Appendix}. 

The Hermite method outlined above yields 14 real-valued critical points of $V$ (up to rotational symmetry), all of which \added{have a line of symmetry}. We explicitly identify these using Equations \eqref{partial1} and \eqref{partial2}. In each case one vortex lies on the line of symmetry, and the other two are separated from it by an angle of $\frac{\pi}{4}$, $\frac{2\pi}{3}$, or $\frac{3\pi}{4}$.  Up to ordering of the vortices, these are the 3 families found in \cite{barry2012relative}. For all 14 critical points, we use Theorem \ref{stability theorem} to characterize stability.  The configurations and associated linear stability are summarized in Figure \ref{example1families}. Notice that the stable configurations correspond to minima of $V$, and unstable configurations correspond to maxima or saddle points. Because of this, we sometimes call $V$ a ``limit potential.'' \added{As pointed out in \cite{barry2012vortex}, relative equilibria resembling those in Figure \ref{family3} below have been observed in electron column experiments (see Figure 7 in \cite{durkin2000experiments}), where the physical equations of motion for the columns are equivalent to the two-dimensional inviscid vorticity equations.  Since these configurations are minima of $V$, they correspond to stable relative equilibria.}
\begin{figure}[ht]
	\centering
\begin{subfigure}[t]{\textwidth}
\centering
		\includegraphics[width=1in]{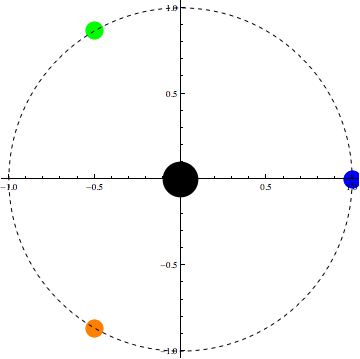} \qquad \qquad \includegraphics[width=1 in]{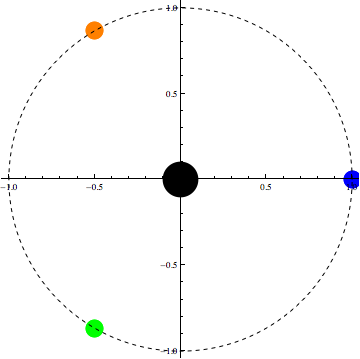}
		\caption{Family 1: \added{Two relative equilibria of the $(1+3)$-vortex problem that are maxima of $V$. They are limits of sequences of unstable relative equilibria, and so we \added{abuse terminology and refer to} them as unstable.}}
		\label{family1}
\end{subfigure}

\begin{subfigure}[t]{1\textwidth}
\centering
\includegraphics[width=1in]{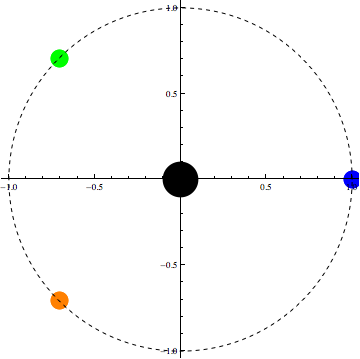}
\includegraphics[width=1in]{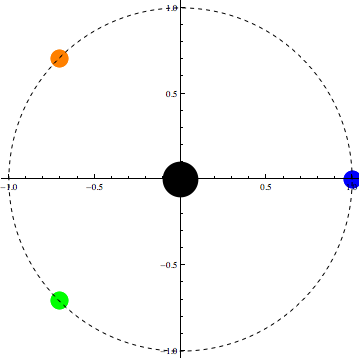}
\includegraphics[width=1in]{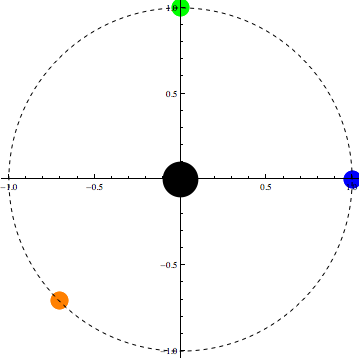}
\includegraphics[width=1in]{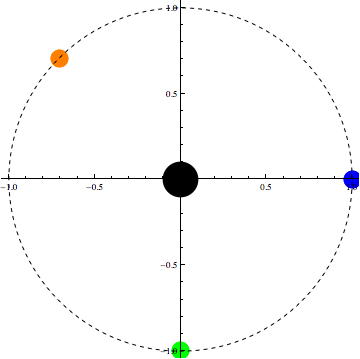}
\includegraphics[width=1in]{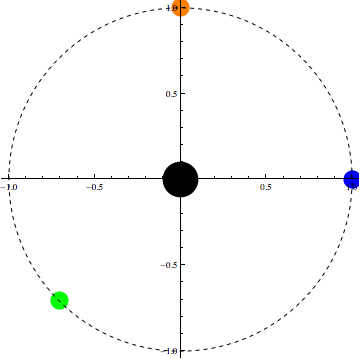}
\includegraphics[width=1in]{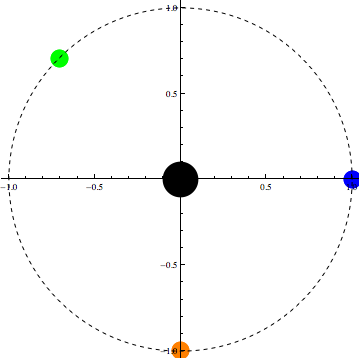}
\caption{Family 2: Six saddle points of $V$.  These configurations are unstable.}
\label{family2}
\end{subfigure}
\begin{subfigure}[t]{\textwidth}
\centering
\includegraphics[width=1in]{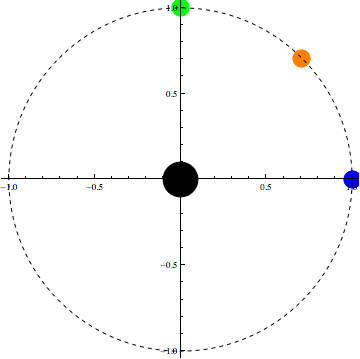}
\includegraphics[width=1in]{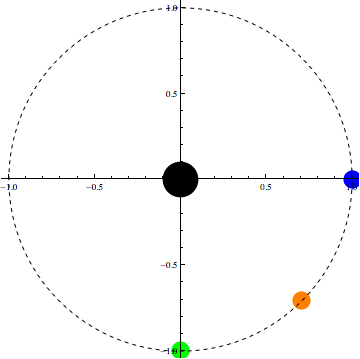}
\includegraphics[width=1in]{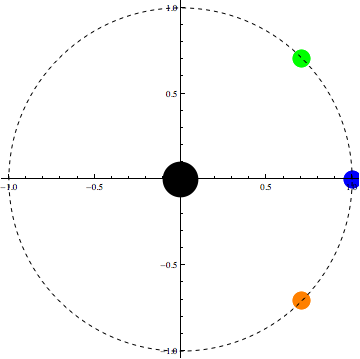}
\includegraphics[width=1in]{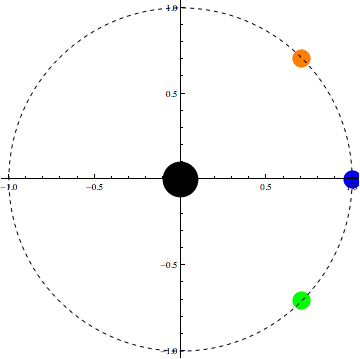}
\includegraphics[width=1in]{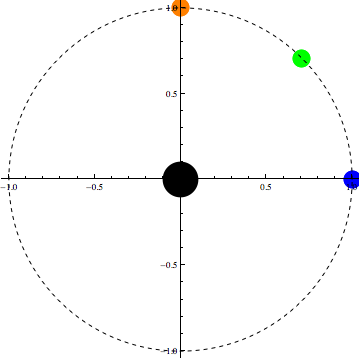}
\includegraphics[width=1in]{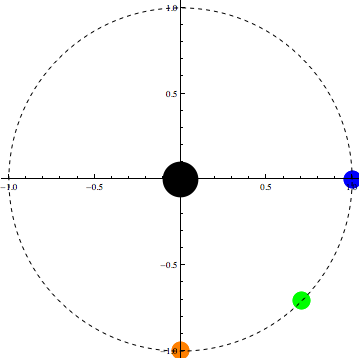}
\caption{Family 3: Six minima of $V$.  These configurations are stable.}
\label{family3}
\end{subfigure}
\caption{All distinct \added{(up to rotational symmetry)} critical points $(\theta_1,\theta_2,\theta_3)$ of $V$ for the case of equal weights.   \added{Since the weights are equal, any reordering of a critical point produces another critical point. We identify three families up to ordering.}\added{ Theorem \ref{thm:existence} implies that these critical points are limiting configurations for sequences of relative equilibria of the $4$-vortex problem as $\varepsilon_k\rightarrow 0$, and Theorem \ref{stability theorem} determines the linear stability of members of the sequence.  We therefore refer to the limit of a stable (unstable) sequence as stable (unstable) without ambiguity.  In all figures, the blue dot corresponds to $\theta_1$, the orange dot to $\theta_2$, and the green dot to $\theta_3$.}}
\label{example1families}
\end{figure}
\FloatBarrier
\subsection{Asymmetric Equilibria\label{sect:example 2}}

\added{Theorem \ref{symmetry theorem} implies that all critical points of $V$ are asymmetric when $\mu_i\neq \mu_j,\, i\neq j$.\added{ In this section, we show that there are \textit{stable} asymmetric relative equilibria and present a number of examples.} We first consider a case where $\mu_i>0$ for all $i$, hence $V$ once again has the limit potential property. Then we consider two cases where circulation weights have varying signs. Additionally, we use an asymmetric critical point of $V$ as a starting point for finding stable asymmetric relative equilibria of the full $4$-vortex problem when $\varepsilon>0$, see Figure \ref{example3continuations}.  }\added{As before, $\theta_1$ is fixed at 0 to reduce rotational symmetry of $V$. The position of $\theta_1$ is colored blue, $\theta_2$ is orange, and $\theta_3$ is green.}

For the first asymmetric example, let $(\mu_1,\mu_2,\mu_3)=(2,1,9)$. The Hermite method yields 10 real valued critical points of $V$, \added{and all were found to be nondegenerate}. \added{In Figure \ref{example2families}, we have grouped the 10 configurations into 5 distinct families of relative equilibria. Each family contains two configurations which are the same up to ordering in $\theta_i$.}   \added{Since all $\mu_i$ are positive, $V$ has the property that minima are limits of sequences of stable relative equilibria of the $4$-vortex problem.  Since these critical points are asymmetric, we immediately have the following result:}

\begin{thm}
There exist linearly stable relative equilibria of the $4$-vortex problem without a line of symmetry.
\end{thm}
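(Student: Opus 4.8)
The plan is to exhibit an explicit choice of distinct positive weights for which the limit potential $V$ possesses a nondegenerate minimum at an asymmetric critical point, and then invoke the existence and stability machinery of Section \ref{sect2} to promote this to a linearly stable, asymmetric relative equilibrium of the full four-vortex problem. Concretely, I would fix $(\mu_1,\mu_2,\mu_3)=(2,1,9)$, so that no two weights coincide. With this choice, Theorem \ref{symmetry theorem} guarantees that \emph{every} critical point of $V$ is asymmetric: a symmetric configuration would force two of the $\mu_i$ to be equal, which fails here.

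Next I would certify that $V$ has at least one nondegenerate minimum. Since all $\mu_i>0$, Corollary \ref{cor:min} says that a nondegenerate minimum of $V$ is exactly the limit of a sequence of linearly stable relative equilibria of the four-vortex problem, so it suffices to locate a single such minimum. Following the recipe of Section \ref{sect:eq}, I would pass to the tangent half-angle coordinates \eqref{coord change}, clear the collision denominators, and apply the Hermite method (Theorem \ref{hermite theorem}) to the resulting polynomial system in order to count all real critical points and confirm that each is nondegenerate. Evaluating the weighted Hessian $\mu^{-1}V_{\theta\theta}$ at each critical point and inspecting the signs of its two nonzero eigenvalues then sorts them into minima, saddles, and maxima; I expect at least one of the ten critical points to be a nondegenerate minimum.

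Having found such a critical point $\bar\theta$, the conclusion is immediate. By Theorem \ref{thm:existence}, $(\bar r,\bar\theta)=(1,1,1,\bar\theta_1,\bar\theta_2,\bar\theta_3)$ is a relative equilibrium of the $(1+3)$-vortex problem, hence by Definition \ref{RE def} it is the limit of a sequence $(r^k,\theta^k)$ of genuine relative equilibria of the four-vortex problem as $\varepsilon_k\to 0$. Theorem \ref{stability theorem}, together with Corollary \ref{cor:min}, shows these $(r^k,\theta^k)$ are linearly stable for $\varepsilon_k$ small, and since they converge to the asymmetric configuration $\bar\theta$ they cannot possess a line of symmetry: a symmetric four-vortex relative equilibrium would limit to a symmetric $(1+3)$ configuration, contradicting Theorem \ref{symmetry theorem}.

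The main obstacle is computational rather than conceptual, and it lies in rigorously establishing that the polynomial system coming from $V_{\theta_2}=V_{\theta_3}=0$ really does have a nondegenerate minimum for the chosen weights. The Hermite method supplies a certified count of the real critical points, but one must still verify nondegeneracy and pin down the index of each from the eigenvalues of the weighted Hessian; care is needed to ensure that the collision factors removed before running the Gr\"{o}bner and Hermite computations neither discard genuine solutions nor introduce spurious ones. Once a nondegenerate minimum is confirmed for $(\mu_1,\mu_2,\mu_3)=(2,1,9)$, the theorem follows with no further work.
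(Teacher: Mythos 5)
Your proposal is correct and follows essentially the same route as the paper: the same choice of distinct positive weights $(\mu_1,\mu_2,\mu_3)=(2,1,9)$, Theorem \ref{symmetry theorem} to rule out symmetric critical points, the Hermite method to certify the count of (nondegenerate) critical points, and Corollary \ref{cor:min} to convert asymmetric minima of $V$ into linearly stable relative equilibria of the $4$-vortex problem. Your closing compactness-style argument that members of the sequence must themselves lack a line of symmetry for small $\varepsilon_k$ is a small point the paper leaves implicit, but it is the same proof.
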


\begin{figure}[ht]
\centering
\begin{subfigure}[t]{.45\textwidth}
\centering
	\includegraphics[width=1in]{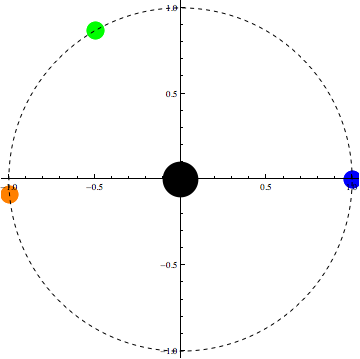} 
	\includegraphics[width=1in]{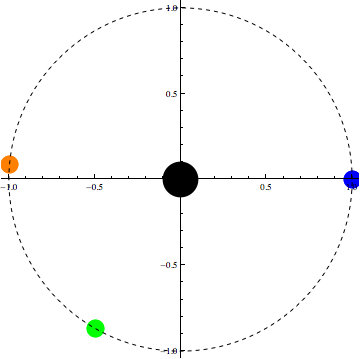}
	\caption{Family 1: Saddle points\\Unstable}
	\end{subfigure}
	\begin{subfigure}[t]{.45\textwidth}
	\centering
		\includegraphics[width=1in]{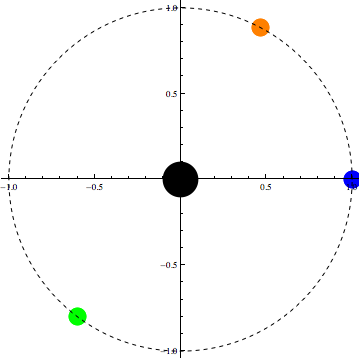}
	 \includegraphics[width=1in]{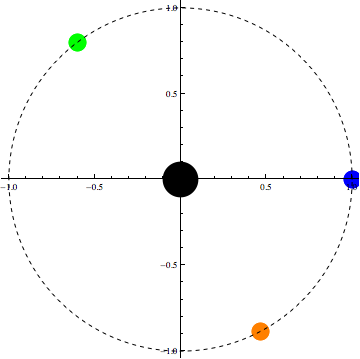}
	 \caption{Family 2: Saddle points\\ Unstable}
	 \end{subfigure}
\begin{subfigure}[t]{.45\textwidth}
\centering
\includegraphics[width=1in]{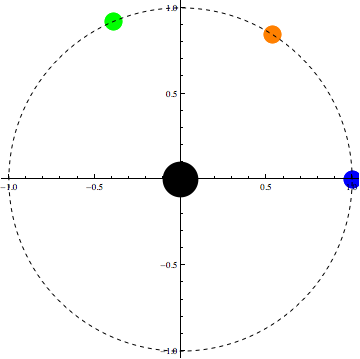}
\includegraphics[width=1in]{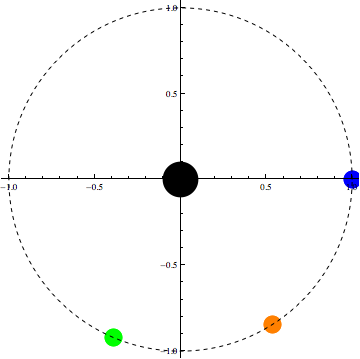}
\caption{Family 3: Minima\\ Stable}
\end{subfigure}
\begin{subfigure}[t]{.45\textwidth}
\centering
\includegraphics[width=1in]{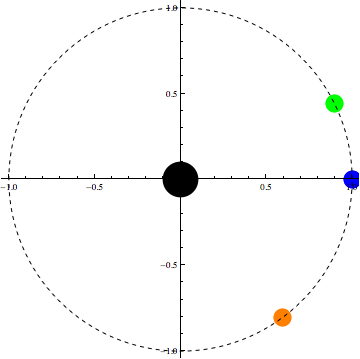} 
\includegraphics[width=1in]{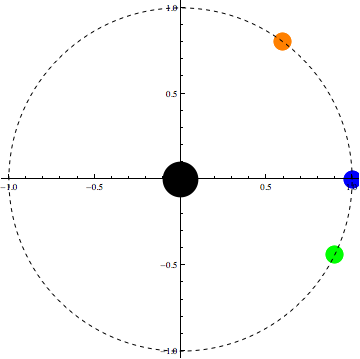}
\caption{Family 4: Minima\\Stable}
\end{subfigure}
\begin{subfigure}[t]{.45\textwidth}
\centering
\includegraphics[width=1in]{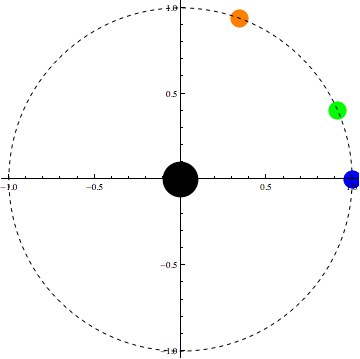}
\includegraphics[width=1in]{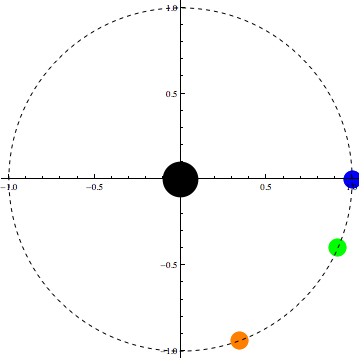}
\caption{Family 5: Minima\\Stable}
\end{subfigure}
\caption{Five families of critical points of $V$ that are relative equilibria of the $(1+3)$-vortex problem when $(\mu_1,\mu_2,\mu_3)=(2,1,9)$, i.e. they are limits of relative equilibria of the $4$-vortex problem as $\varepsilon\rightarrow 0$.}
	\label{example2families}
\end{figure}

\FloatBarrier
 We now consider an example with both positive and negative circulation weights.  Let $(\mu_1,\mu_2,\mu_3)=(2,-1,3)$. As in \added{the previous example}\deleted{in Example 2} there are no symmetric configurations. There are again 10 real-valued critical points of $V$, and 5 distinct families of relative equilibria up to ordering. These are pictured in Figure \ref{example3families}. Because the signs of the circulation parameters are mixed, $\mu^{-1}$ is no longer a positive-definite metric on the configuration space, and it is possible to have saddle points that are limits of sequences of stable relative equilibria.

\begin{figure}[ht]
\centering
\begin{subfigure}[t]{.45\textwidth}
\centering
		\includegraphics[width=1in]{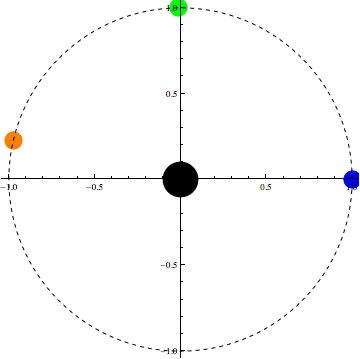} 
		\includegraphics[width=1in]{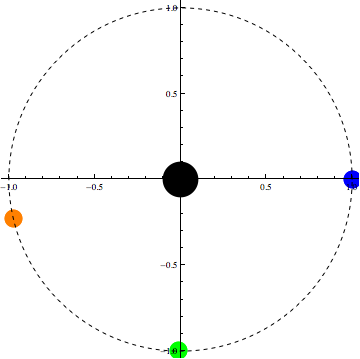}
		\caption{Family 1: Maxima\\ Unstable}
	\end{subfigure}
	\begin{subfigure}[t]{.45\textwidth}
	\centering
		\includegraphics[width=1in]{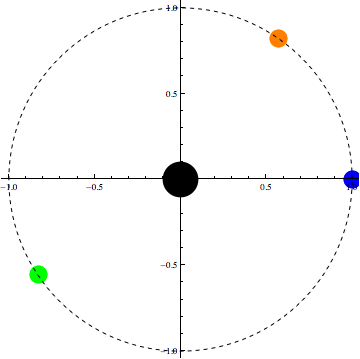}
	 \includegraphics[width=1in]{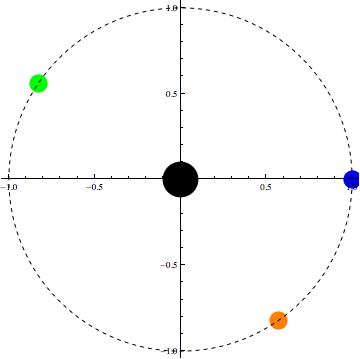}
	 \caption{Family 2: Minima\\Unstable}
	 \end{subfigure}
	 \begin{subfigure}[t]{.45\textwidth}
	 \centering
		\includegraphics[width=1in]{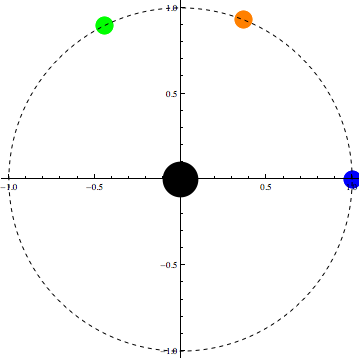} 
		\includegraphics[width=1in]{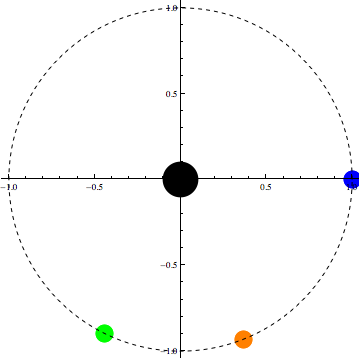}
		\caption{Family 3: Saddle points \\Unstable}
		\end{subfigure}
		\begin{subfigure}[t]{.45\textwidth}
		\centering
			\includegraphics[width=1in]{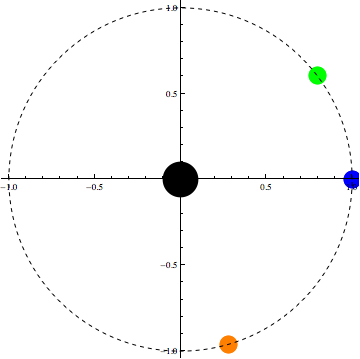} 
		\includegraphics[width=1in]{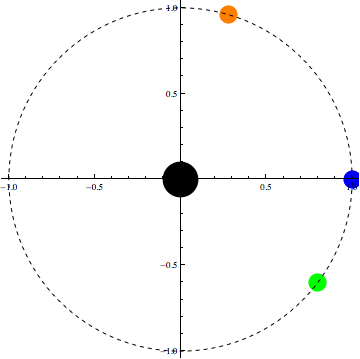}
		\caption{Family 4: Saddle points\\Unstable}
		\end{subfigure}
		\begin{subfigure}[t]{.45\textwidth}
		\centering
		\includegraphics[width=1in]{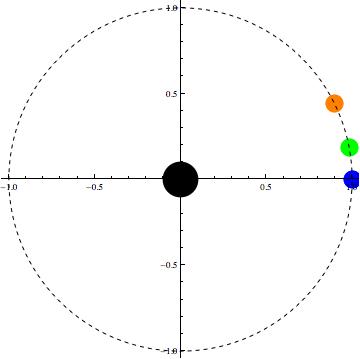}
		\includegraphics[width=1in]{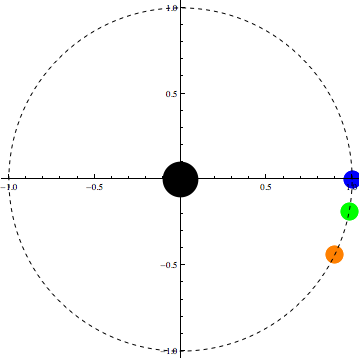}
		\caption{Family 5: Saddle points\\Stable}
		\end{subfigure}
		\caption{Five families of critical points of $V$ that are relative equilibria of the $(1+3)$-vortex problem when $(\mu_1,\mu_2,\mu_3)=(2,-1,3)$ }
		\label{example3families}
\end{figure}

\added{Because nondegenerate critical points of $V$ are limits of sequences of relative equilibria of the point vortex equations, they can be used as starting points for numerical continuation.  We illustrate this idea in  Figure \ref{example3continuations}.  The first panel shows a nondegenerate critical point of $V$ satisfying the hypotheses of Theorem \ref{stability theorem}, i.e. $\mu^{-1}V_{\theta\theta}$ has $N-1$ positive eigenvalues. The remaining panels show members of a ``nearby'' family of asymmetric linearly stable relative equilibria. To find these relative equilibria, we use a numerical root finder based on Newton's method to identify zeroes of the full point vortex equations with $\varepsilon$ small and the critical point as the initial guess. We then increase $\varepsilon$ incrementally and apply the procedure repeatedly using the relative equilibrium from the previous step as the new initial guess.} \deleted{, we work with the normalized set $(\mu_1,\mu_2,\mu_3)=\frac{1}{\sqrt{14}}(2,-1,3)$. We ``grow" a family of relative equilibria of the 4-vortex problem using a critical point of $V$ as the starting point.} 
\\
\begin{figure}[ht]
\centering
\begin{subfigure}[t]{.3\textwidth}
\centering
\includegraphics[width=1in]{Comp3bRE9.png}
\caption{Critical point of $V$}
\end{subfigure}
\begin{subfigure}[t]{.3\textwidth}
\centering
\includegraphics[width=1in]{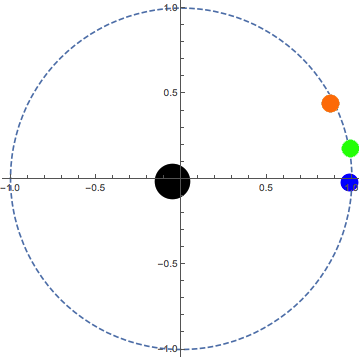}
\caption{$\varepsilon=.05$}
\end{subfigure}
\begin{subfigure}[t]{.3\textwidth}
\centering
\includegraphics[width=1in]{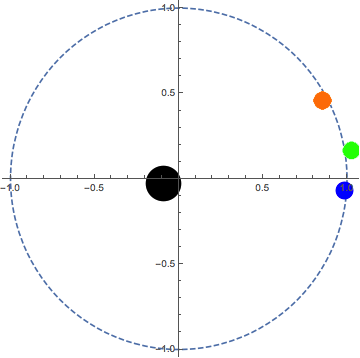}
\caption{$\varepsilon =.1$}
\end{subfigure}
\caption{\added{Panel (a) shows a nondegenerate asymmetric saddle point of $V$ with $(\mu_1,\mu_2,\mu_3)=\frac{1}{\sqrt{14}}(2,-1,3)$ such that $\mu^{-1}V_{\theta\theta}$ has two positive eigenvalues and one zero eigenvalue, hence Theorem 3 implies the existence of a convergent sequence of stable relative equilibria of the $4$-vortex problem.  Nearby stable asymmetric relative equilibria of the $4$-vortex problem for $\varepsilon>0$ are shown in (b) and (c).} }
\label{example3continuations}
\end{figure}
\FloatBarrier 
Let $(\mu_1,\mu_2,\mu_3)=(-1,-3,10)$. There are 8 real critical points of $V$ and 4 distinct families up to ordering of the vortices that are pictured in Figure \ref{example4families}. This example has maxima of $V$ that correspond to stable relative equilibria. We remark that if all $\mu_i$ were negative, $V$ would be a ``negative potential,'' and maxima of $V$ would be stable.
\begin{figure}[ht]
	\centering
	\begin{subfigure}[t]{.45\textwidth}
	\centering
		\includegraphics[width=1in]{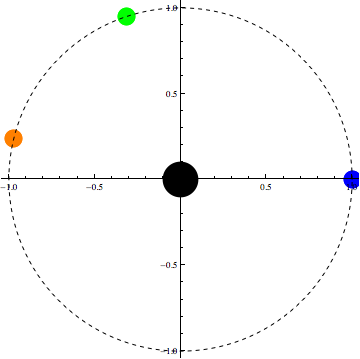}
		 \includegraphics[width=1in]{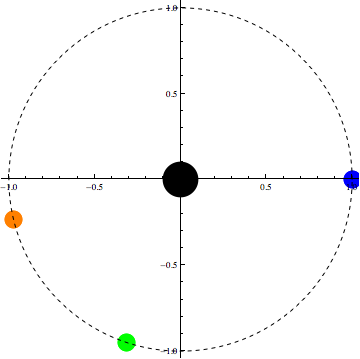}
		 \caption{Family 1: saddle\\ not stable}
		 \end{subfigure}
	\begin{subfigure}[t]{.45\textwidth}
	\centering
			\includegraphics[width=1in]{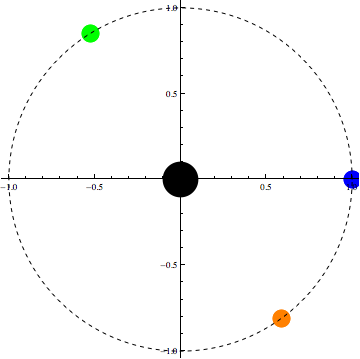}
		 \includegraphics[width=1in]{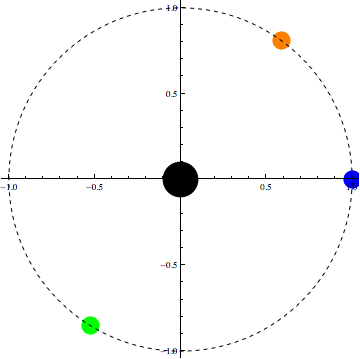}
		 \caption{Family 2: saddle\\not stable}
		 \end{subfigure}
	\begin{subfigure}[t]{.45\textwidth}
	\centering
		\includegraphics[width=1in]{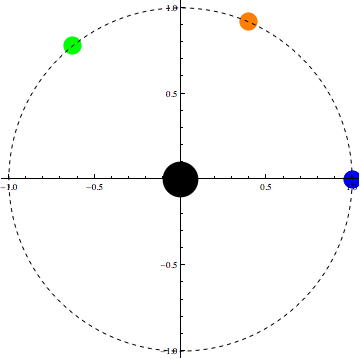}
		\includegraphics[width=1in]{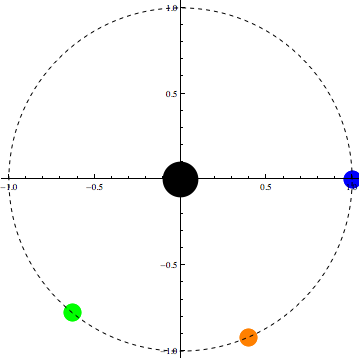}
		\caption{Family 3: maximum\\ stable}
		\end{subfigure}
	\begin{subfigure}[t]{.45\textwidth}
	\centering
		\includegraphics[width=1in]{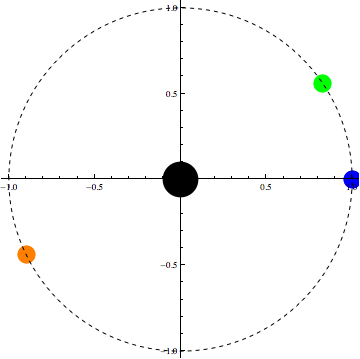} 
		\includegraphics[width=1in]{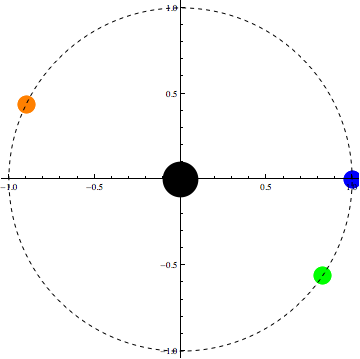}
		\caption{Family 4: minimum\\ not stable}
		\end{subfigure}
\caption{Four families of critical points of $V$ that are relative equilibria of the $(1+3)$-vortex problem when $(\mu_1,\mu_2,\mu_3)=(-1,-3,10)$}
\label{example4families}
\end{figure}

\FloatBarrier
\section{Discussion\label{sect6}}
We have analyzed existence and stability of relative equilibria with a dominant vortex in the case where weak vortices are allowed to have circulations of varying signs and weights. First, we extended the work of \cite{barry2012relative} to construct a particular function $V$ whose critical points are exactly the angular positions of relative equilibria in the $(1+N)$-vortex problem, the limiting relative equilibrium problem for one dominant vortex and $N$ infinitesimal vortices. Under nondegeneracy conditions, critical points can be continued to relative equilibrium solutions of the full point vortex equations. Additionally, for sufficiently small $\varepsilon$, the linear stability of these families of relative equilibria is determined by the eigenvalues of the circulation-weighted Hessian matrix $\mu^{-1}V_{\theta\theta}$. When the circulation weights are positive, minima of $V$ are limits of linearly stable configurations, hence $V$ is described as a ``limit potential". The potential property does not hold when some circulations are negative. 

In the 4-vortex problem with one dominant vortex, we discovered that symmetry of relative equilibria is actually rare. In the weak vortex circulation limit,
a configuration can be symmetric only when there is a symmetry in the vortex strengths -- at least two must be equal.  We were able to successfully implement analytical techniques from algebraic geometry for identifying and counting roots, and this approach was partly motivated by the expectation that they will be generalizable and useful for the analysis of similar problems with larger $N$. \added{In several cases} we identified all relative equilibria and classified their stability.  We were further able to illustrate the existence of linearly stable asymmetric relative equilibria.  \added{Related methods have proved fruitful for the study of bifurcations in the $(1+3)$-body problem \cite{corbera2011central}. Work in preparation will show how the methods used here, particularly the Hermite method, can be used to study bifurcations of relative equilibria in this problem, see \cite{hoyer2014bifurcations}.} Moreover, we are currently exploring the application of these methods to Bose-Einstein Condensate point vortex models, see e.g. \cite{barry2015generating} in which equilibria were described as roots of generating polynomials.

\textbf{Acknowledgements:} The authors thank Chris Budd, Rachel Kuske, and Rick Moeckel for helpful discussions and suggestions, and are especially grateful to Rick for his Mathematica code for the Hermite Method.  A.M.B. acknowledges support from the National Science Foundation grant DMS-1402372 and the Institute for Mathematics and its Applications.  A.H-L. was supported by Rick Moeckel's NSF grant DMS-1208908 and as an Ed Lorenz Postdoctoral Fellow by the Mathematics and Climate Research Network with funds provided by NSF DMS-0940243.

\bibliographystyle{siam}
\bibliography{RE2}

\begin{thebibliography}{10}

\bibitem{albouy2013someproblems}
{\sc A.~Albouy, H.~E. Cabral, and A.~A. Santos}, {\em Some problems on the
  classical n-body problem}.
\newblock arXiv:1305.3191, 2013.

\bibitem{aref2003vortex}
{\sc H.~Aref, P.~Newton, M.~Stremler, T.~Tokieda, and D.~Vainchtein}, {\em
  Vortex crystals}, Adv. Appl. Mech., 39 (2003), pp.~1--79.

\bibitem{aref1998point}
{\sc H.~Aref and D.~L. Vainchtein}, {\em Point vortices exhibit asymmetric
  equilibria}, Nature, 392 (1998), pp.~769--770.

\bibitem{barry2012vortex}
{\sc A.~M. Barry}, {\em Vortex crystals in fluids}, PhD thesis, Boston
  University, 2012.

\bibitem{barry2015generating}
{\sc A.~M. Barry, F.~Hajir, and P.~Kevrekidis}, {\em Generating functions,
  polynomials and vortices with alternating signs in {B}ose--{E}instein
  condensates}, J. Phys. A: Math. Gen., 48 (2015), p.~155205.

\bibitem{barry2012relative}
{\sc A.~M. Barry, G.~R. Hall, and C.~E. Wayne}, {\em Relative equilibria of the
  (1+n)-vortex problem}, J. Nonlinear Sci., 22 (2012), pp.~63--83.

\bibitem{basu_algorithms}
{\sc S.~Basu, R.~Pollack, and M.-F. Roy}, {\em Algorithms in real algebraic
  geometry}, vol.~20033, Springer, 2005.

\bibitem{casasayas1994central}
{\sc J.~Casasayas, J.~Llibre, and A.~Nunes}, {\em Central configurations of the
  planar 1+n body problem}, Celestial Mech. Dynam. Astronom., 60 (1994),
  pp.~273--288.

\bibitem{corbera2011central}
{\sc M.~Corbera, J.~M. Cors, and J.~Llibre}, {\em On the central configurations
  of the planar 1+ 3 body problem}, Celestial mechanics and dynamical
  astronomy, 109 (2011), pp.~27--43.

\bibitem{Coste_SemiAlgebraicGeometry}
{\sc M.~Coste}, {\em An {I}ntroduction to {S}emialgebraic {G}eometry}, in
  Geometry and Robotics, LNCS 391, Citeseer, 1989.

\bibitem{Cox_Ideals}
{\sc D.~Cox, J.~Little, and D.~O'Shea}, {\em Ideals, Varieties, and
  Algorithms}, vol.~3, Springer, 1992.

\bibitem{Cox_UsingAlg}
{\sc D.~A. Cox, J.~B. Little, and D.~O'Shea}, {\em Using Algebraic Geometry},
  Springer, 1998.

\bibitem{durkin2000experiments}
{\sc D.~Durkin and J.~Fajans}, {\em Experiments on two-dimensional vortex
  patterns}, Physics of Fluids (1994-present), 12 (2000), pp.~289--293.

\bibitem{hall}
{\sc G.~Hall}, {\em Central configurations in the planar 1+n body problem},
  (1988).
\newblock Boston University, preprint.

\bibitem{hampton2014relative}
{\sc M.~Hampton, G.~E. Roberts, and M.~Santoprete}, {\em Relative equilibria in
  the four-vortex problem with two pairs of equal vorticities}, J. Nonlinear
  Sci., 24 (2014), pp.~39--92.

\bibitem{helmholtz1867}
{\sc H.~v. Helmholtz}, {\em {LXIII}. on {I}ntegrals of the hydrodynamical
  equations, which express vortex-motion}, The London, Edinburgh, and Dublin
  Philosophical Magazine and Journal of Science, 33 (1867), pp.~485--512.

\bibitem{hoyer2014bifurcations}
{\sc A.~Hoyer-Leitzel}, {\em Bifurcations and linear stability of families of
  relative equilibria with a dominant vortex}, PhD thesis, University of
  Minnesota, 2014.

\bibitem{kirchhoff1877vorlesungen}
{\sc G.~Kirchhoff, W.~Wien, K.~Hensel, and M.~Planck}, {\em Vorlesungen
  {\"u}ber mathematische Physik}, vol.~1, Teubner, 1877.

\bibitem{kossin2004mesovortices}
{\sc J.~P. Kossin and W.~H. Schubert}, {\em Mesovortices in {H}urricane
  {I}sabel}, Bull. Amer. Meteorological Soc., 85 (2004), pp.~151--153.

\bibitem{lim2001point}
{\sc C.~C. Lim and A.~J. Majda}, {\em Point vortex dynamics for coupled
  surface/interior qg and propagating heton clusters in models for ocean
  convection}, Geophys. Astrophys. Fluid Dyn., 94 (2001), pp.~177--220.

\bibitem{marchioro1993vortices}
{\sc C.~Marchioro and M.~Pulvirenti}, {\em Vortices and localization in euler
  flows}, Communications in mathematical physics, 154 (1993), pp.~49--61.

\bibitem{maxwell1859stability}
{\sc J.~C. Maxwell}, {\em On the stability of the motion of Saturn's rings},
  1859.

\bibitem{middelkamp2011guiding}
{\sc S.~Middelkamp, P.~Torres, P.~Kevrekidis, D.~Frantzeskakis,
  R.~Carretero-Gonz{\'a}lez, P.~Schmelcher, D.~Freilich, and D.~Hall}, {\em
  Guiding-center dynamics of vortex dipoles in {B}ose-{E}instein condensates},
  Phys. Rev. A, 84 (2011), p.~011605.

\bibitem{moeckel1994linear}
{\sc R.~Moeckel}, {\em Linear stability of relative equilibria with a dominant
  mass}, J. Dynam. Differential Equations, 6 (1994), pp.~37--51.

\bibitem{morikawa1960geostrophic}
{\sc G.~K. Morikawa}, {\em Geostrophic vortex motion}, Journal of Meteorology,
  17 (1960), pp.~148--158.

\bibitem{newton2007construction}
{\sc P.~K. Newton and G.~Chamoun}, {\em Construction of point vortex equilibria
  via brownian ratchets}, Proc. Roy. Soc. A, 463 (2007), pp.~1525--1541.

\bibitem{roberts1998linear}
{\sc G.~E. Roberts}, {\em Linear stability in the (1+ n)-gon relative
  equilibrium}, Hamiltonian Systems and Celestial Mechanics, 303 (1998).

\bibitem{roberts2013stability}
\leavevmode\vrule height 2pt depth -1.6pt width 23pt, {\em Stability of
  relative equilibria in the planar n-vortex problem}, SIAM J. Appl. Dyn.
  Syst., 12 (2013), pp.~1114--1134.

\bibitem{thomson1883treatise}
{\sc J.~J. Thomson}, {\em A Treatise on the Motion of Vortex Rings: an essay to
  which the Adams prize was adjudged in 1882, in the University of Cambridge},
  Macmillan, 1883.

\bibitem{torres2011dynamics}
{\sc P.~Torres, P.~Kevrekidis, D.~Frantzeskakis, R.~Carretero-Gonz{\'a}lez,
  P.~Schmelcher, and D.~Hall}, {\em Dynamics of vortex dipoles in confined
  {B}ose--{E}instein condensates}, Phys. Lett. A, 375 (2011), pp.~3044--3050.

\end{thebibliography}

\appendix
\section{Mathematica Code for Hermite Algorithm\label{Appendix}}
Thanks to Rick Moeckel for providing us with this code.
\begin{verbatim}
GetExponents[mon_, vars_] := Map[Exponent[mon, #] &, vars]
GetAllExponents[poly_, vars_] := With[{poly1 = Expand[poly]},
  Table[GetExponents[poly[[i]], vars], {i, 1, Length[poly]}] // Union]

LeadingDegRevLexExponent[explist_] := Module[{maxdegree, l},
  deg[v_] := v[[1]] + v[[2]];
  maxdegree = Max[Map[deg, explist]];
  l = Select[explist, (deg[#] == maxdegree) &];
  Sort[l, (#1[[2]] < #2[[2]]) &][[1]]]

MakeCone[{a_, b_}, cmax_, dmax_] := 
 Flatten[Table[{c, d}, {c, a, cmax}, {d, b, dmax}], 1]

MonomialBasis[lexps_, cmax_, dmax_] := Module[{l, i},
  l = MakeCone[{0, 0}, cmax, dmax];
  For[i = 1, i <= Length[lexps], i++,
   l = Complement[l, MakeCone[lexps[[i]], cmax, dmax]];];
  l]
  
GetCoefficient[p_, {a_, b_}] := Module[{q},
  q = Expand[p];
  If[{a, b} == {0, 0}, q /. {r1 -> 0, r2 -> 0},
   If[a == 0, Coefficient[q, r2^b] /. {r1 -> 0},
    If[b == 0, Coefficient[q, r1^a] /. {r2 -> 0},
     Coefficient[q, r1^a*r2^b]]]]
  ]


PolyReduceVector[p_, gb_, mb_] := Module[{r},
  r =  PolynomialReduce[p, gb, {r1, r2}, 
     MonomialOrder -> DegreeReverseLexicographic][[2]];
  Map[GetCoefficient[r, #] &, mb]]

PolyReduceCoefficient[p_, gb_, {a_, b_}] := Module[{r},
  r =  PolynomialReduce[p, gb, {r1, r2}, 
     MonomialOrder -> DegreeReverseLexicographic][[2]];
  GetCoefficient[r, {a, b}]]

MultMapTrace[f_, gb_, mb_] := 
 Plus @@ Table[
   PolyReduceCoefficient[f*r1^mb[[i, 1]]*r2^mb[[i, 2]], gb, 
    mb[[i]]], {i, 1, Length[mb]}]

HermiteForm[q_, gb_, mb_] := Module[{H, i, j, mon},
  H = Table[0, {i, 1, Length[mb]}, {j, 1, Length[mb]}];
  For[i = 1, i <= Length[mb], i++,
   H[[i, i]] = 
    MultMapTrace[q*r1^(2 mb[[i, 1]])*r2^(2*mb[[i, 2]]), gb, mb];
   (*Print[{i,i},H[[i,i]]];*)
   For[j = i + 1, j <= Length[mb], j++,
    mon = 
     q*r1^(mb[[i, 1]] + mb[[j, 1]])*r2^(mb[[i, 2]] + mb[[j, 2]]);
    H[[i, j]] = MultMapTrace[mon, gb, mb];
    H[[j, i]] = H[[i, j]];
    (*Print[{i,j},{j,i},H[[i,j]]];*)];];
  H]

ClearCol[A_, i_] := Module[{B = A, d, ri, j},
  ri =  A[[i]];
  d = ri[[i]];
  For[j = i + 1, j <= Length[A], j++,
   B[[j]] = B[[j]] - B[[j, i]]*ri/d;];
  B]

ClearRow[A_, i_] := Transpose[ClearCol[Transpose[A], i]]
ClearCR[A_, i_] := ClearRow[ClearCol[A, i], i]

SwapRow[A_, i_, j_] := Module[{B = A},
  B[[i]] = A[[j]];
  B[[j]] = A[[i]];
  B]

SwapCol[A_, i_, j_] := Transpose[SwapRow[Transpose[A], i, j]]
SwapCR[A_, i_, j_] := SwapRow[SwapCol[A, i, j], i, j]

RowSumDiff[A_, i_, j_] := Module[{B = A, v, w},
  v = A[[i]] + A[[j]];
  w = -A[[i]] + A[[j]];
  B[[i]] = v;
  B[[j]] = w;
  B]
Clear[ColSumDiff]
ColSumDiff[A_, i_, j_] := Transpose[RowSumDiff[Transpose[A], i, j]]
SumDiffCR[A_, i_, j_] := RowSumDiff[ColSumDiff[A, i, j], i, j]

SymmetricReduce[A_] := Module[{B = A, n = Length[A], i, j, k},
  For[i = 1, i <= n, i++,
   (* if pivot is 0 look for a nonzero diagonal and switch*)
   
   If[B[[i, i]] == 0, For[j = i + 1, j <= n, j++,
      If[B[[j, j]] != 0, B = SwapCR[B, i, j]; Break[];]; ];];
   (* if pivot is still 0 do a row/col sum/diff *)
   
   If[B[[i, i]] == 0, For[j = i + 1, j <= n, j++,
      If[B[[i, j]] != 0, B = SumDiffCR[B, i, j]; Break[];];];];
   If[B[[i, i]] != 0, B = ClearCR[B, i];];
   ];
  B]

Sig[A_] := Module[{A1, i, diagsigns, p, n},
  A1 = SymmetricReduce[A];
  diagsigns = Table[Sign[A1[[i, i]]], {i, 1, Length[A1]}];
  Plus @@ diagsigns]
  
\end{verbatim}
\end{document}